\begin{document}
	\begin{titlepage}
		\begin{center}
			\textbf{\uppercase{Compact closed categories and $\Gamma$-categories }}\\[0.015cm]
			\textit{(with an appendix by André Joyal)} \\[1cm]
			\Large{\uppercase{Amit Sharma}}\\[0.25cm]
		
		\end{center}

		ABSTRACT: In this paper we study compact closed categories within the context of homotopical algebra.
		We construct two new model category structures by localizing two (Quillen equivalent) model categories of symmetric monoidal categories with the objective of establishing the free compact closed category on one generator as a fibrant replacement of the free symmetric monoidal category on one generator, in our localized model categories. We go on to show that the fibrant objects in our model categories are compact closed categories.


	\end{titlepage}

\date{\today}
%

\newcommand{\CONT}{\noindent}
\newcommand{\FIG}{Fig.\ }
\newcommand{\FIGS}{Figs.\ }
\newcommand{\SEC}{Sec.\ }
\newcommand{\SECS}{Secs.\ }
\newcommand{\TAB}{Table }
\newcommand{\TABS}{Tables }
\newcommand{\EQ}{Eq.\ }
\newcommand{\EQS}{Eqs.\ }
\newcommand{\APP}{Appendix }
\newcommand{\APPS}{Appendices }
\newcommand{\CHP}{Chapter }
\newcommand{\CHPS}{Chapters }

\newcommand{\OFF}{\emph{G2off}~}
\newcommand{\TOO}{\emph{G2Too}~}
\newcommand{\CatS}{Cat_{\bigS}}
\newcommand{\PicS}{{\underline{\pic}}^{\oplus}}
\newcommand{\HPicS}{{Hom^{\oplus}_{\pic}}}

\newtheorem{thm}{Theorem}[section]
\newtheorem{lem}[thm]{Lemma}
\newtheorem{conj}[thm]{Conjecture}
\newtheorem{coro}[thm]{Corollary}
\newtheorem{prop}[thm]{Proposition}

\theoremstyle{definition}
\newtheorem{df}[thm]{Definition}
\newtheorem{nota}[thm]{Notation}

\newtheorem{ex}[thm]{Example}
\newtheorem{exs}[thm]{Examples}

\theoremstyle{remark}
\newtheorem*{note}{Note}
\newtheorem{rem}{Remark}
\newtheorem{ack}{Acknowledgments}

\newcommand{\ChI}{{\textit{\v C}}\textit{ech}}
\newcommand{\Ch}{{\v C}ech}

\newcommand{\ChZG}{hermitian line $0$-gerbe}
\renewcommand{\theack}{$\! \! \!$}

\newcommand{\ChG}{flat hermitian line $1$-gerbe}
\newcommand{\ChC}{hermitian line $1$-cocycle}
\newcommand{\ChGG}{flat hermitian line $2$-gerbe}
\newcommand{\ChCC}{hermitian line $2$-cocycle}
\newcommand{\id}{id}
\newcommand{\LC}{\mathfrak{C}}
\newcommand{\Coker}{Coker}
\newcommand{\Com}{Com}
\newcommand{\Hom}{Hom}
\newcommand{\Mor}{Mor}
\newcommand{\Map}{Map}
\newcommand{\alg}{alg}
\newcommand{\an}{an}
\newcommand{\Ker}{Ker}
\newcommand{\Ob}{Ob}
\newcommand{\Proj}{\mathbf{Proj}}
\newcommand{\topo}{\mathbf{Top}}
\newcommand{\kan}{\mathcal{K}}
\newcommand{\pkan}{\mathcal{K}_\bullet}
\newcommand{\Kan}{\mathbf{Kan}}
\newcommand{\pKan}{\mathbf{Kan}_\bullet}
\newcommand{\QCat}{\mathbf{QCat}}
\newcommand{\gp}{\mathcal{A}_\infty}
\newcommand{\mdl}{\mathcal{M}\textit{odel}}
\newcommand{\sSets}{\mathbf{sSets}}
\newcommand{\sSetsQ}{(\mathbf{sSets, Q})}
\newcommand{\sSetsK}{(\mathbf{sSets, \Kan})}
\newcommand{\pSSets}{\mathbf{sSets}_\bullet}
\newcommand{\pSSetsK}{(\mathbf{sSets}_\bullet, \Kan)}
\newcommand{\pSSetsQ}{(\mathbf{sSets_\bullet, Q})}
\newcommand{\cyl}{\mathbf{Cyl}}
\newcommand{\lin}{\mathcal{L}_\infty}
\newcommand{\Vect}{\mathbf{Vect}}
\newcommand{\Aut}{Aut}
\newcommand{\pic}{\mathcal{P}\textit{ic}}
\newcommand{\Dlin}{\pic}
\newcommand{\bigS}{\mathbf{S}}
\newcommand{\bigA}{\mathbf{A}}
\newcommand{\bhom}{\mathbf{hom}}
\newcommand{\bhomK}{\mathbf{hom}({\textit{K}}^+,\textit{-})}
\newcommand{\Bhom}{\mathbf{Hom}}
\newcommand{\bhomk}{\mathbf{hom}^{{\textit{k}}^+}}
\newcommand{\Dlino}{\pic^{\textit{op}}}
\newcommand{\lino}{\mathcal{L}^{\textit{op}}_\infty}
\newcommand{\lind}{\mathcal{L}^\delta_\infty}
\newcommand{\linK}{\mathcal{L}_\infty(\kan)}
\newcommand{\linC}{\mathcal{L}_\infty\text{-category}}
\newcommand{\linCs}{\mathcal{L}_\infty\text{-categories}}
\newcommand{\ainCs}{\text{additive} \ \infty-\text{categories}}
\newcommand{\ainC}{\text{additive} \ \infty-\text{category}}
\newcommand{\inC}{\infty\text{-category}}
\newcommand{\inCs}{\infty\text{-categories}}
\newcommand{\gS}{{\Gamma}\text{-space}}
\newcommand{\gSet}{{\Gamma}\text{-set}}
\newcommand{\ggS}{\Gamma \times \Gamma\text{-space}}
\newcommand{\gSs}{\Gamma\text{-spaces}}
\newcommand{\gSets}{\Gamma\text{-sets}}
\newcommand{\ggSs}{\Gamma \times \Gamma\text{-spaces}}
\newcommand{\gO}{\Gamma-\text{object}}
\newcommand{\gSCat}{{\Gamma}\text{-space category}}
\newcommand{\pss}{\mathbf{S}_\bullet}
\newcommand{\gSC}{{{{\Gamma}}\mathcal{S}}}
\newcommand{\pGSC}{{{{\Gamma}}\mathcal{S}}_\bullet}
\newcommand{\pGSCStr}{{{{\Gamma}}\mathcal{S}}_\bullet^{\textit{str}}}
\newcommand{\ggSC}{{\Gamma\Gamma\mathcal{S}}}
\newcommand{\gSD}{\mathbf{D}(\gSC^{\textit{f}})}
\newcommand{\sCat}{\mathbf{sCat}}
\newcommand{\pSCat}{\mathbf{sCat}_\bullet}
\newcommand{\gSetCat}{{{{\Gamma}}\mathcal{S}\textit{et}}}
\newcommand{\Dhom}{\mathbf{R}Hom_{\pic}}
\newcommand{\gop}{\Gamma^{\textit{op}}}
\newcommand{\fU}{\mathbf{U}}
\newcommand{\cDN}{\underset{\mathbf{D}[\textit{n}^+]}{\circ}}
\newcommand{\cDK}{\underset{\mathbf{D}[\textit{k}^+]}{\circ}}
\newcommand{\cDL}{\underset{\mathbf{D}[\textit{l}^+]}{\circ}}
\newcommand{\cD}{\underset{\gSD}{\circ}}
\newcommand{\cDT}{\underset{\gSD}{\widetilde{\circ}}}
\newcommand{\ppsSets}{\sSets_{\bullet, \bullet}}
\newcommand{\gdHom}{\underline{Hom}_{\gSD}}
\newcommand{\HomU}{\underline{Hom}}
\newcommand{\ominf}{\Omega_\infty}
\newcommand{\ev}{ev}
\newcommand{\cu}{C(X;\mathfrak{U}_I)}
\newcommand{\Sing}{Sing}
\newcommand{\AlgEin}{\A\textit{lg}_{\E_\infty}}
\newcommand{\SFunc}[2]{\mathbf{SFunc}({#1} ; {#2})}
\newcommand{\unit}[1]{\mathrm{1}_{#1}}
\newcommand{\liminj}{\varinjlim}
\newcommand{\limproj}{\varprojlim}
\newcommand{\HMapC}[3]{\mathcal{M}\textit{ap}^{\textit{h}}_{#3}(#1, #2)}
\newcommand{\tensPGSR}[2]{#1 \underset{\gSR}\wedge #2}
\newcommand{\pTensP}[3]{#1 \underset{#3}\wedge #2}
\newcommand{\MGCat}[2]{\underline{\map}_{\gCAT}({#1},{ #2})}
\newcommand{\MGCatGen}[3]{\underline{\map}_{#3}({#1},{ #2})}
\newcommand{\MGBoxCat}[2]{\underline{\map}_{\gSC}^{\Box}({#1},{ #2})}
\newcommand{\TensPFunc}[1]{- \underset{#1} \otimes -}
\newcommand{\TensP}[3]{#1 \underset{#3}\otimes #2}
\newcommand{\MapC}[3]{\mathcal{M}\textit{ap}_{#3}(#1, #2)}
\newcommand{\bHom}[3]{{#2}^{#1}}
\newcommand{\gn}[1]{\Gamma^{#1}}
\newcommand{\gnk}[2]{\Gamma^{#1}({#2}^+)}
\newcommand{\gnf}[2]{\Gamma^{#1}({#2})}
\newcommand{\ggn}[1]{\Gamma\Gamma^{#1}}
\newcommand{\Nat}{\mathbb{N}}
\newcommand{\partition}[2]{\delta^{#1}_{#2}}
\newcommand{\inclusion}[2]{\iota^{#1}_{#2}}
\newcommand{\EinQC}{\text{coherently commutative monoidal quasi-category}} 
\newcommand{\EinQCs}{\text{coherently commutative monoidal quasi-categories}}
\newcommand{\pHomCat}[2]{[#1,#2]_{\bullet}}
\newcommand{\CatHom}[3]{[#1,#2]^{#3}}
\newcommand{\pCatHom}[3]{[#1,#2]_\bullet^{#3}}
\newcommand{\EinC}{\text{coherently commutative monoidal category}}
\newcommand{\EinCs}{\text{coherently commutative monoidal categories}}
\newcommand{\EinLO}{E_\infty{\text{- local object}}}
\newcommand{\EinSLO}{\E_\infty\S{\text{- local object}}}
\newcommand{\Ein}{E_\infty}
\newcommand{\EinS}{E_\infty{\text{- space}}}
\newcommand{\EinSs}{E_\infty{\text{- spaces}}}
\newcommand{\PCat}{\mathbf{Perm}}
\newcommand{\PCatCC}{\mathbf{Perm}^{\textit{cc}}}
\newcommand{\nor}[1]{{#1}^\textit{nor}}
\newcommand{\pSSetsHom}[3]{[#1,#2]_\bullet^{#3}}
\newcommand{\PNat}{\overline{\L}}
\newcommand{\PStr}{\L}
\newcommand{\Gn}[1]{\Gamma[#1]}
\newcommand{\GIH}{\Gamma\textit{H}_{\textit{in}}}
\newcommand{\QStr}[1]{\L_\bullet(\ud{#1})}
\newcommand{\QStrF}{\L_\bullet}
\newcommand{\Kbar}{\overline{\K}}
\newcommand{\gPerm}{{\Gamma\PCat}}
\newcommand{\gCat}{{\Gamma{\text{-category}}}}
\newcommand{\Du}{{\mathfrak{D}}}
\newcommand{\EHom}[2]{[#1; #2]^\E}
\newcommand{\Cob}[1]{{\mathfrak{Fr}}^{\textit{cc}}(\ud{#1})}
\newcommand{\gpd}{{\mathbf{Gpd}}}
\newcommand{\ol}[1]{\overline{#1}}
\newcommand{\gCob}[1]{\Gamma\Cob{#1}}
\newcommand{\gCatCC}{{\gCAT^{\textit{cc}}}}
\newcommand{\gCatSM}{{\gCAT^{\otimes}}}
\newcommand{\gPCat}{\Gamma\PCat}
\newcommand{\pGPCat}{\gPCat_\bullet}
\newcommand{\lCC}{\PStr^{\textit{cc}}}
\newcommand{\kCC}{\K^{\textit{cc}}}
\newcommand{\Comp}{\mathbf{Comp}^{\textit{str}}}
\newcommand{\Supp}[1]{{\textit{Supp}(#1)}}

\def\F{\mathcal F}
\def\Pic{\mathbf{2}\mathcal P\textit{ic}}
\def\nc{\mathbb C}

\def\Z{\mathbb Z}
\def\P{\mathbb P}
\def\J{\mathcal J}
\def\I{\mathcal I}
\def\nC{\mathbb C}
\def\H{\mathcal H}
\def\A{\mathcal A}
\def\C{\mathcal C}
\def\D{\mathcal D}
\def\E{\mathcal E}
\def\G{\mathcal G}
\def\B{\mathcal B}
\def\L{\mathcal L}
\def\U{\mathcal U}
\def\K{\mathcal K}

\def\M{\mathcal M}
\def\O{\mathcal O}
\def\R{\mathcal R}
\def\S{\mathcal S}
\def\N{\mathcal N}

\newcommand{\Fin}{\F\textit{in}}
\newcommand{\undertilde}[1]{\underset{\sim}{#1}}
\newcommand{\abs}[1]{{\lvert#1\rvert}}
\newcommand{\mC}[1]{\mathfrak{C}(#1)}
\newcommand{\sigInf}[1]{\Sigma^{\infty}{#1}}
\newcommand{\x}[4]{\underset{#1, #2}{ \overset{#3, #4} \prod }}
\newcommand{\mA}[2]{\textit{Add}^n_{#1, #2}}
\newcommand{\mAK}[2]{\textit{Add}^k_{#1, #2}}
\newcommand{\mAL}[2]{\textit{Add}^l_{#1, #2}}
\newcommand{\Mdl}[2]{\L_\infty}
\newcommand{\inv}[1]{#1^{-1}}
\newcommand{\Lan}[2]{\mathbf{Lan}_{#1}(#2)}
\newcommand{\ccCat}{\mathbf{cc}\PCat}
\newcommand{\fCC}{\F^{\textit{cc}}}

\newcommand{\del}{\partial}
\newcommand{\sCatO}{\mathcal{S}Cat_\O}
\newcommand{\FCgop}{\mathbf{F}\mC{N(\gop)}}
\newcommand{\hProd}{{\overset{h} \oplus}}
\newcommand{\hProdn}{\underset{n}{\overset{h} \oplus}}
\newcommand{\hProdk}[1]{\underset{#1}{\overset{h} \oplus}}
\newcommand{\map}{\mathcal{M}\textit{ap}}
\newcommand{\SMGS}[2]{\map_{\gSC}({#1},{ #2})}
\newcommand{\MGS}[2]{\underline{\map}_{\gSC}({#1},{ #2})}
\newcommand{\MGSBox}[2]{\underline{\map}^{\Box}_{\gSC}({#1},{ #2})}
\newcommand{\Aqcat}[1]{\underline{#1}^\oplus}
\newcommand{\Cat}{\mathbf{Cat}}
\newcommand{\Sp}{\mathbf{Sp}}
\newcommand{\SpStb}{\mathbf{Sp}^{\textit{stable}}}
\newcommand{\SpStr}{\mathbf{Sp}^{\textit{strict}}}
\newcommand{\Sspec}{\mathbb{S}}
\newcommand{\ud}[1]{\underline{#1}}
\newcommand{\inrt}{\mathbf{Inrt}}
\newcommand{\act}{\mathbf{Act}}
\newcommand{\StrSMHom}[2]{[#1,#2]_\otimes^{\textit{str}}}
\newcommand{\SMHom}[2]{[#1,#2]_\otimes}
\newcommand{\ESMHom}[2]{[#1,#2]_\otimes^\E}
\newcommand{\gCats}{\Gamma\text{-categories}}
\newcommand{\gCAT}{\Gamma\Cat}
\newcommand{\gCATCCM}{\Gamma\Cat^\otimes}
\newcommand{\KStr}[1]{\K(#1)}
\newcommand{\pGPCatStr}{\pGPCat^{\textit{Str}}}
\newcommand{\KSeg}{\K}
		\tableofcontents

	\section{Introduction}
\label{Introduction}

A \emph{compact closed} category is a symmetric monoidal category having the special property that each object has a left (and therefore a right) dual. The archetype example of compact closed categories is the category of finite dimensional vector spaces. Some other prominent examples of compact closed categories include the category of finitely generated projective modules over a commutative ring and the category of finite dimensional representations of a compact group. The category of abelian groups can be characterized as a \emph{reflective} localization of the category of commutative monoids, namely the localization functor has a right adjoint which is the fully faithful inclusion of the full subcategory of abelian groups (local-objects). It happens that this localization is generated by a single map which is the inclusion $i_+:\Nat \to \Z$.
The model category of (permutative) Picard groupoids $(\PCat, \pic)$ \cite{sharma5} is a left Bousfield localization of the natural model category of permutative (or strict symmetric monoidal) categories $\PCat$. This localization is also generated by a single map which is the inclusion $i^{\pic}:\F^\otimes(\ast) \to \F^{\pic}{(\ast)}$ of the free permutative category on one generator into the free Picard groupoid on one generator. In this paper we obtain a model category of (permutative) compact closed categories as a left Bousfield localization of the category of permutative categories $\PCat$.
The localization is again generated by a single map which is the inclusion $i^{}:\F^\otimes(\ast) \to \Cob{1}$,  where the codomain is the free compact closed category on one generator. The main objective of this paper is to compare the aforementioned model category of (permutative) compact closed categories with a model category of \emph{coherently} compact closed categories which is a left Bousfield localization of a (model) category of coherently commutative monoidal categories \cite{Sharma} generated by a single map which is (up to equivalence) an adjunct of the generator $i^{}$.
The main result of this paper is that the two aforementioned model categories are Quillen equivalent. This result maybe regarded as a \emph{strictification theorem} for (coherently) compact closed categories.

 The classical ($1$-dimensional) cobordism hypothesis \cite{BD95},\cite{JL1} informally states that the (framed) $1$-Bordism category, namely the category whose objects are framed $0$-dimensional manifolds and morphisms are (diffeomorphism classes of) framed $1$-dimensional manifolds with boundary, is the free compact closed category on one generator. To a purely algebraic problem, the cobordism hypothesis provides an answer which is rooted in differential topology. In this paper we seek an answer to the same underlying algebraic problem within homotopical algebra. This paper is a first in a series of papers aimed at developing a theory for compact closed $(\infty, n)$-categories and also providing a purely algebraic description of a free compact closed $(\infty, n)$-category on one generator. In this paper we construct a model category whose fibrant objects can be described as categories equipped with a coherently commutative multiplication structure wherein each object has a dual. This model category is intended to be a prototype for subsequently constructing model categories whose fibrant objects are models for $(n+k, n)$-categories equipped with a coherently commutative monoidal structure and which are \emph{fully dualizable}.

 
  Normalized coherently commutative monoidal categories were introduced in the paper \cite{segal} where they were called $\gCats$. These (normalized) objects have also been referred to in the literature as \emph{special} $\gCats$. A model category whose fibrant objects are (unnormalized) coherently commutative monoidal categories was constructed in \cite{Sharma}. In this paper we will denote this model category by $\gCatSM$. Unlike a symmetric monoidal category, higher coherence data is specified as a part of the definition of a coherently commutative monoidal category. Moreover, in the latter, a \emph{tensor product} of two objects is unique only up to a contractible space of choices. In this paper we extend the notion of compact closed categories to the more generalized setting of coherently commutative monoidal categories and define \emph{coherently compact closed categories}.
  We construct another model category structure on the (functor) category $\gCAT$ whose fibrant objects are coherently compact closed categories. This model category, denoted $\gCatCC$, is a (left) Bousfield localization of the model category of coherently commutative monoidal categories $\gCatSM$.
We go on to show that the thickened Segal's Nerve functor \cite[Sec. 6.]{Sharma}  is a right Quillen functor of a Quillen equivalence between the aforementioned model category structure of (permutative) compact closed categories on $\PCat$ and $\gCatCC$. The following picture depicts the idea of coherently compact closed categories and also depicts how various coherently commutative objects in $\Cat$ are related:
\begin{equation*}
\xymatrix@R=16mm{
	& *+++++[e][F]{\txt{Categories}} \ar[d]^{\txt{free construction}}  \\
	& *+++++[e][F]{\txt{Coherently \\ commutative monoidal \\ categories}} \ar[ld]_{\txt{adding \\ inverses} \ \ } \ar[rd]^{\txt{adding \\
			duals}} \\
	*+++++[e][F]{\txt{Coherently \\ commutative Picard \\ groupoids}} &&
	*+++++[e][F]{\txt{Coherently  \\ compact closed \\ categories}}
}
\end{equation*}
The addition processes depicted by the two diagonal arrows in the above picture are manifested by localizations.

The Barrat-Priddy-Quillen theorem was reformulated in the language of $\gSs$ in \cite{segal}. In the same paper, Segal constructed a functor from the category of (normalized) $\gSs$ $\pGSC$ to the category of (connective) spectra. This functor maps the unit of the symmetric monoidal structure on $\pGSC$, namely the free $\gS$ on one generator $\gn{1}$, to the \emph{sphere spectrum}. In section $2$ of the same paper, Segal constructed a (normalized) $\gS$, which he denoted by $B\Sigma$, which can also be described as (simplicial nerve of) the (categorical) Segal's nerve \cite{Sharma} of the (skeletal) permutative category of finite sets  and bijections, denoted $\KStr{\ud{\N}}$. The reformulated theorem states that the spectrum associated to the $\gS$ $B\Sigma$ is \emph{stably} equivalent to the sphere spectrum. In other words, the reformulation states that the $\gS$ $\gn{1}$ is equivalent to $B\Sigma$ in the \emph{stable} model category of $\gSs$ constructed in \cite{Schwede}. A stronger version of this theorem called the (special) Barrat-Priddy-Quillen theorem appeared in the paper \cite{BMoer}. This theorem states that the two $\gSs$ in context are also \emph{unstable} equivalent \emph{i.e.} they are equivalent in a model category of special $\gSs$. Along the same lines, our construction of the model category $\gCatCC$ implies that the Segal's nerve of the free compact closed category on  $1$-generator $\KSeg{(\Cob{1})}$ is a fibrant replacement of the (representable) $\gCat$ $\gn{1}$ in the model category $\gCatCC$.

In Appendix A we collect some folklore results regarding the notion of duality. In Appendix B we recall the basic notion and existence result of left Bousfield localization of model categories.

\begin{ack}
	The author would like to thank André Joyal for having numerous discussions with the author regarding this paper and also for writing Appendix A: Aspects of Duality, which has added a lot of clarity to the paper. 
	
	\end{ack}
	\section[Compact closed permutative categories]{Compact closed permutative categories}
\label{cc-mdl-str-Perm}
A compact closed category is a symmetric monoidal category wherein each object has the special property of having a left (and hence a right) \emph{dual}. The category of all (small) symmetric monoidal categories has a subcategory $\PCat$ which inherits a model category structure from the \emph{natural} model category $\Cat$. The objects of $\PCat$ are \emph{permutative} categories (or strict symmetric monoidal categories) which are those symmetric monoidal categories whose \emph{tensor product} is strictly associative and strictly unital. The morphisms of $\PCat$ are strict symmetric monoidal functors namely those symmetric monoidal functors which preserve the symmetric monoidal structure strictly.
In this section we will construct another model category structure on the category of permutative categories $\PCat$ whose fibrant objects are (permutative) compact closed categories. We begin by recalling the definition of a compact closed category from \cite{KL}:
\begin{df}
	\label{CC-cats}
	A \emph{compact closed} category is a symmetric monoidal category $C$ in which each object $c \in Ob(C)$ can be assigned a triple $(c^\bullet, \eta_c, \epsilon_c)$ where $c^\bullet$ is an object of $C$ (called \emph{right dual} of $c$) and $\eta_c:\unit{C} \to c^\bullet \otimes c$ and $\epsilon_c:c \otimes c^\bullet \to \unit{C}$ are two maps in $C$ such that the following two maps are identities:
	\begin{equation}
	\label{left-dual}
	c^\bullet  \cong \unit{C} \otimes c^\bullet \overset{\eta_c \otimes id} \to c^\bullet \otimes c \otimes c^\bullet \overset{id \otimes \epsilon_c } \to c^\bullet \otimes \unit{C} \cong c^\bullet 
	\end{equation}
	and
	\begin{equation}
	\label{right-dual}
	c  \cong c \otimes \unit{C} \overset{id \otimes \eta_c} \to c \otimes c^\bullet \otimes c \overset{\epsilon_c \otimes  id} \to c \otimes \unit{C} \cong c 
	\end{equation}
\end{df}

\begin{rem}
	The symmetric monoidal structure ensures that the right dual is also a left dual and therefore we will just call $c^\bullet$ as the dual of $c$.
\end{rem}
\begin{nota}
	Unless specified otherwise, in this paper a compact closed category will mean a permutative category which is compact closed.
\end{nota}

We recall that a compact closed category $C$ is a closed symmetric monoidal category wherein the internal hom object, between two objects $c_1, c_2 \in C$ is defined as follows:
\[
[c_1, c_2]_C := c_1 \otimes c_2^\bullet.
\]

 In the paper \cite{KL} the authors describe a category $\Comp$ whose objects are pairs $(C, L(C))$ consisting if  a compact closed permutative category $C$ and a collection:
 \[
 L(C) :=  \underset{A \in Ob(C)}\bigsqcup (A^\bullet, \eta_A, \epsilon_A) 
 \]
 where $A^\bullet$ is an object of $C$ and $\eta_A$ and $\epsilon_A$ are the unit and counit maps which exhibit $A^\bullet$ as a dual of $A$ in $C$. In other words, an object of $\Comp$ is a permutative category $C$ along with a chosen \emph{duality data} $(A^\bullet, \eta_A, \epsilon_A)$ for each object $A$. The morphisms of $\Comp$ are those strict symmetric monoidal functors which (strictly) preserve duality data. The category $\Comp$ is equipped with an obvious forgetful functor $U^{\textit{cc}}:\Comp \to \PCat$. We denote by $U'$ the following composite of forgetful functors:
 \[
 \Comp \overset{U^{\textit{cc}}} \to \PCat \overset{U} \to \Cat.
 \]
 In the same paper the following adjunction is constructed:
 \[
 \F':\Cat \rightleftarrows \Comp:U'
 \]
 The authors go on to construct a \emph{stricter} version $\F''$ of the left adjoint functor $\F'$ mentioned above which is equipped with a natural equivalence $\beta:\F'' \Rightarrow \F$ where a component of $\beta$ is an inclusion functor.
 We want to provide an easier description of the category $\F''(\ud{1})$, where $\ud{1}$ is the terminal category. We will construct a compact closed permutative category $\Cob{1}$ which is isomorphic to $\F''(\ud{1})$.
 \begin{nota}
 	\label{set-of-signs}
 	We denote by $\ol{1}$ the following set:
 	 \[
 	 \ol{1} := \lbrace +, - \rbrace
 	 \]
 	 and refer to it as the set of orientations of a point.
 	\end{nota}
 
   The object set of $\Cob{1}$ is defined as follows:
\[
Ob(\Cob{1}) :=  \underset{n \in \Nat} \bigsqcup  \Fin(\ud{n}; \ol{1}) ,
\]
where $\Fin$ is the category of (unbased) finite sets and (unbased) maps and $\ud{n} = \lbrace 1, 2 \dots, n \rbrace$.

The set $\ol{1}$ is equipped with a bijection $in:\ol{1} \to \ol{1}$ which changes the sign \emph{i.e.}  $in(+) = -$ and $in(-) = +$. For each $\ud{n}$ is equipped with a bijection $\Sigma_\textit{rev}(n):\ud{n} \to \ud{n}$ which reverses the order: $i \mapsto n-i + 1$.
\begin{df}
	\label{def-dual}
	For each object $f:\ud{n} \to \ol{1}$ in the proposed permutative category $\Cob{1}$, we define another object which we denote by $f^\bullet$, by the following composite:
	\[
	\ud{n} \overset{\Sigma_\textit{rev}(n)} \to \ud{n} \overset{f} \to \ol{1} \overset{in} \to \ol{1}.
	\]
	We will refer to $f^\bullet$ as the \emph{dual} of $f$.
\end{df}
\begin{nota}
	For an object $f:\ud{n} \to \ol{1}$ in the proposed permutative category $\Cob{1}$, the \emph{length} of $f$ is the cardinality of its domain namely $n$.
	\end{nota}
\begin{df}
	A \emph{tensor product} of $f:\ud{n} \to \ol{1}$ with $g:\ud{m} \to \ol{1}$ is the canonical map inducted on the sum of domains $\ud{n+m}$ which is denoted:
	\[
	f \Box g:\ud{n + m} \to \ol{1}
	\]
	\end{df}
The above tensor product has a unit object which we define next:
\begin{df}
	We denote by $\ud{0}:\emptyset \to \ol{1}$ the unique map from the emptyset to $\ol{1}$. We will refer to $\ud{0}$ as the \emph{unit} object of $\Cob{1}$.
	\end{df}
  \begin{df}
  	A \emph{pairing} $G$ from $f:\ud{n} \to \ol{1}$ to $g:\ud{m} \to \ol{1}$ in the proposed category $\Cob{1}$ is a finite collection $G = \lbrace s_i: i \in \ud{p} \rbrace$ of sections of the (canonical) map:
  	\[
  	f^\bullet \Box g:\ud{n+m} \to \ol{1}
  	\]
  	such that the following conditions are satisfied:
  	\begin{enumerate}
  		\item The images of the sections in the collection $G$ are pairwise disjoint subsets of $\ud{n+m}$.
  		\item The collection $G$ is maximal in the sense that for any section $t$ of $f^\bullet \Box g$ such that $t \notin G$, there exists an $s_j \in G$ such that the intersection $im(t) \cap im(s_j)$ is non-empty.
  		\end{enumerate}
  	\end{df}
  
  \begin{df}
  	In a pairing $G = \lbrace s_i: i \in \ud{p} \rbrace$ from $f:\ud{n} \to \ol{1}$ to $g:\ud{m} \to \ol{1}$, a section $s_i$ is called a \emph{external} section if $im(s_i) \nsubseteq \ud{n}$ and $im(s_i) \nsubseteq \ud{m}$. Otherwise the section is called an \emph{internal} section.
  	\end{df}
  \begin{df}
  	An internal section $s_i$ in a pairing $G = \lbrace s_i: i \in \ud{p} \rbrace$ from $f:\ud{n} \to \ol{1}$ to $g:\ud{m} \to \ol{1}$ is called a \emph{codomain} section if it factor's through the domain of the codomain object $g$ as follows:
  	\[
  	\xymatrix{
  	 \ol{1} \ar[rr]^{s_i} \ar[rd] && \ud{n+m} \\
  	 & \ud{m} \ar@{^{(}->}[ru]
     }
  	\]
  	A pairing is called a pairing of \emph{codomain index} $k$  if it contains $k$ codomain sections.
  	\end{df}
  \begin{df}
  	An internal section $s_i$ in a pairing $G = \lbrace s_i: i \in \ud{p} \rbrace$ from $f:\ud{n} \to \ol{1}$ to $g:\ud{m} \to \ol{1}$ is called a \emph{domain} section if it factor's through the domain of the domain object $f$ as follows:
  	\[
  	\xymatrix{
  		\ol{1} \ar[rr]^{s_i} \ar[rd] && \ud{n+m} \\
  		& \ud{n} \ar@{^{(}->}[ru]
  	}
  	\]
  	A pairing is called a pairing of \emph{domain index} $k$  if it contains $k$ domain sections.
  \end{df}
\begin{df}
	A \emph{total} pairing  from $f:\ud{n} \to \ol{1}$ to $g:\ud{m} \to \ol{1}$ in the proposed category $\Cob{1}$ is a pairing $G = \lbrace s_i: i \in \ud{p} \rbrace$ such that there is a bijection:
	\[
	\underset{i \in \ud{p}} \bigsqcup im(s_i) \cong \ud{n + m}
	\] 
\end{df}
\begin{nota}
	We will denote a total pairing $G$ from $f:\ud{n} \to \ol{1}$ to $g:\ud{m} \to \ol{1}$ by $G:f \leadsto g$ and refer to $f$ as the \emph{domain} of $G$ and $g$ as the \emph{codomain} of $G$.
	\end{nota}
\begin{df}
	\label{pairing-nums}
	We associate with a total pairing $G$, a triple $(G_e, G_d, G_c)$ consisting of natural numbers, where $G_e$ is the number of external sections in $G$, $G_d$ is the number of domain sections in $G$ and $G_c$ is the number of codomain sections in $G$.
	\end{df}
  Now we describe a \emph{composition} of two total pairings $G$ from $f:\ud{n} \to \ol{1}$ to $g:\ud{m} \to \ol{1}$ and $H$ from $g:\ud{m} \to \ol{1}$ to $h:\ud{l} \to \ol{1}$. We observe that $G$ and $H$ determine two bijections:
  	\[
\underset{i \in \ud{(n+m)/2}} \bigsqcup \ol{1} \cong  \underset{i \in \ud{(n+m)/2}} \bigsqcup im(s_i) \cong \ud{n + m}
  \] 
  and
  	\[
  \underset{i \in \ud{(m+p)/2}} \bigsqcup \ol{1} \cong \underset{i \in \ud{(m+p)/2}} \bigsqcup im(t_i) \cong \ud{m + p}
  \] 
  These two bijections together determine another bijection
  \[
  [H, G]:\underset{i \in \ud{(n+2m+p)/2}} \bigsqcup \ol{1} \cong \ud{n + 2m + p}
  \]
  Now we consider the following pullback diagram:
  \[
  \xymatrix{
  \underset{i \in \ud{(n+p)/2}} \bigsqcup \ol{1} \ar[r] \ar[d]_{(H, G)}^\cong & \underset{i \in \ud{(n+2m+p)/2}} \bigsqcup \ol{1} \ar[d]^{[H, G]} \\
  \ud{n + p} \ar[r] & \ud{n + 2m + p}
   }
  \]
  The bijection $(H, G)$ uniquely determines a total pairing which we denote by $H \circ G$.
  \begin{df}
  	\label{cir-created}
  	For two composable total pairings $G$ and $H$ as above, we define a natural number $\text{Cir}(H, G)$ as follows:
  	\[
  	\textit{Cir}(H, G) := H_c + G_c - (H \circ G)_c
  	\]
  	We will refer to this number as the \emph{circles created} by the composition of $H$ and $G$.
  	\end{df}
 
  Now we have all the machinery needed to define a morphism in $\Cob{1}$:
  \begin{df}
  	A morphism from $f:\ud{n} \to \ol{1}$ to $g:\ud{m} \to \ol{1}$ in $\Cob{1}$ is a pair $(G, k)$ where $G$ is a total pairing from $f$ to $g$ and $k \in \Nat$.
  	\end{df}
  Let $(H, l):g \to h$ be another morphism in $\Cob{1}$. We define
  \[
  (H, l) \circ (G, k) := (H \circ G, k+l + \textit{Cir}(H, G))
  \]
  \begin{nota}
  	Let $s:\ol{1} \to \ud{n}$ be a section of an object $f:\ud{n} \to \ol{1}$. We denote by $(s, m)$ the following map obtained by extending the codomain of $s$:
  	\[
  	\ol{1} \overset{s} \to \ud{n} \hookrightarrow \ud{n+m}.
  	\]
  	We denote by $(m, s)$ the following map:
  	\[
  	\ol{1} \overset{s} \to \ud{n} \hookrightarrow \ud{m+n}.
  	\]
  	\end{nota}
  \begin{rem}
  	Let $s:\ol{1} \to \ud{n}$ be a section of a map $f:\ud{n} \to \ol{1}$ which represents an object in $\Cob{1}$ and $g:\ud{m} \to \ol{1}$ be another object in $\Cob{1}$, then $(s, m):\ol{1} \to \ud{n+m}$ is a section of $f \Box g:\ud{n+m} \to \ol{1}$.
  	\end{rem}
  The \emph{tensor product} of two maps $(G, k):f \to g$ and $(J, q):g \to h$ is defined as follows:
  \[
  (G, k) \Box (J, q) := (G \Box H, k + q),
  \]
  where the total pairing $G \Box H$ is defined as follows:
  \[
  \lbrace (s_i, m):  s_i \in G \rbrace \bigsqcup \lbrace (n, t_i):  t_i \in J \rbrace.
  \]
 The aforementioned tensor product defines a bifunctor:
 \[
 - \Box - :\Cob{1} \times \Cob{1} \to \Cob{1}.
 \]
 The above bifunctor endows the category $\Cob{1}$ with a strict symmetric monoidal (permutative) category structure.
 Further, the permutative category $\Cob{1}$ is a compact closed category wherein each object $f:\ud{n} \to \ol{1}$ has a (two-sided) dual $f^\bullet$ namely, there exist maps $\eta_f:\ud{0} \to f \Box f^\bullet$ and $\epsilon_f:f^\bullet \Box f \to \ud{0}$ in $\Cob{1}$ such that the equations \eqref{left-dual} and \eqref{right-dual} are satisfied.
 
 \begin{rem}
 	\label{Strict-CC}
 	In the compact closed category $\Cob{1}$, we observe the following:
 	\begin{enumerate}
 		\item The unit object is its own dual \emph{i.e.} $\ud{0}^\bullet = \ud{0}$.
 		\item The dual of the dual of an object $f:\ud{n} \to \ol{1}$ in $\Cob{1}$, is the object $f$ itself \emph{i.e.} $(f^\bullet)^\bullet = f$, for each $f \in \Cob{1}$.
 		
 		\item For a pair of objects $f:\ud{n} \to \ol{1}$ and $g:\ud{m} \to \ol{1}$ in $\Cob{1}$, the following commutative diagram implies the equality $(f \Box g)^\bullet = g^\bullet \Box f^\bullet$:
 		\begin{equation*}
 		\xymatrix@C=25mm{
 		 \ud{m+n} \ar[r]^{\Sigma_\textit{rev}(\ud{m+n})} \ar[rd]_{\Sigma_\textit{rev}(\ud{m}) + \Sigma_\textit{rev}(\ud{n}) \ \ \ \ \ \ } & \ud{n + m} \ar[r]^{f \Box g} & \ol{1} \ar[d]_{\textit{in}} \\
 		 & \ud{m+n} \ar[r] \ar[ru]_{g \Box f} & \ol{1}
         }
 		\end{equation*}
 		\end{enumerate}
 	\end{rem}
 
 \begin{rem}
 	\label{univ-prop-cob}
 	The compact closed permutative category $\Cob{1}$ is equipped with an isomorphism of permutative categories $can:\Cob{1} \cong U^{\textit{cc}}(\F''(\ud{1}))$. Further there is a strict symmetric monoidal functor:
 	\[
 	\Cob{1} \overset{can} \to U^{\textit{cc}}(\F''(\ud{1})) \overset{U^{\textit{cc}}(\beta(\ud{1}))} \to U^{\textit{cc}}(\F'(\ud{1}))
 	\]
 	which is an equivalence of permutative categories. Now the univeral property of $\F'(\ud{1})$ implies that for each object $c$ of a compact closed permutative category $C$ there is a strict symmetric monoidal functor $F_c:\Cob{1} \to C$ which is unique upto a unique natural isomorphism.
 	\end{rem}
 
\begin{rem}
	\label{equiv-with-fr-bord}
	The permutative category $\Cob{1}$ is equivalent to the symmetric monoidal category whose objects are $0$-dimensional framed manifolds and morphisms are (boundary preserving) diffeomorphism classes of $1$-dimensional framed manifolds with boundary. In other words $\Cob{1}$ is an algebraic model for the framed $1$-Bordism category.
\end{rem}
\begin{df}
	\label{Alg-Bord-cat}
	We will refer to $\Cob{1}$ either as the \emph{free} compact closed category on one generator or as the algebraic $1$-Bordism category. 
\end{df}

\begin{rem}
	\label{Fr-SM-CC-incl}
	The free compact closed category over one generator $\Cob{1}$ is equipped with an inclusion (strict) symmetric monoidal functor $i:\F^\otimes(\ud{1}) \to \Cob{1}$, where $\F^\otimes(\ud{1})$ is the free permutative category on one generator namely it is (isomorphic to) the category of finite sets and bijections. Both $\Cob{1}$ and $\F^\otimes(\ud{1})$ are cofibrant objects in the natural model category of permutative categories $\PCat$.
\end{rem}
\begin{prop}
	\label{CC-cats-loc-obj}
	A permutative category is compact closed if and only if it is a $\lbrace i \rbrace$-local object.
\end{prop}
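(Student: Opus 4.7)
The plan is to unpack the definition of $\{i\}$-local object for the left Bousfield localization at $i$ and reduce each direction to the universal properties of $\F^\otimes(\ud{1})$ and $\Cob{1}$ recorded in Remarks \ref{Fr-SM-CC-incl} and \ref{univ-prop-cob}. Since both $\F^\otimes(\ud{1})$ and $\Cob{1}$ are cofibrant in the natural model category $\PCat$, and every permutative category is fibrant, the derived mapping space $\HMapC{X}{C}{\PCat}$ is modeled by the nerve of the groupoid core of the hom-category $\StrSMHom{X}{C}$ of strict symmetric monoidal functors and monoidal natural transformations. Consequently $C$ is $\{i\}$-local precisely when the restriction functor
\[
i^*: \StrSMHom{\Cob{1}}{C} \to \StrSMHom{\F^\otimes(\ud{1})}{C}
\]
induces an equivalence of groupoid cores.

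For the forward direction, suppose $C$ is compact closed. Because $\F^\otimes(\ud{1})$ is the free permutative category on one generator, the evaluation $F \mapsto F(\ud{1})$ is an equivalence $\StrSMHom{\F^\otimes(\ud{1})}{C} \simeq C$, under which $i^*$ sends a strict SM functor $G: \Cob{1} \to C$ to $G(\ud{1}) \in C$. Remark \ref{univ-prop-cob} supplies, for each object $c \in C$, a strict SM functor $F_c: \Cob{1} \to C$ with $F_c(\ud{1}) = c$, unique up to unique natural isomorphism. This is precisely the assertion that $i^*$ is essentially surjective and fully faithful on groupoid cores, hence an equivalence.

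For the converse, suppose $C$ is $\{i\}$-local, so $i^*$ is an equivalence on groupoid cores. Given an object $c \in C$, let $\bar c: \F^\otimes(\ud{1}) \to C$ be the unique strict SM functor sending $\ud{1}$ to $c$. Essential surjectivity of $i^*$ produces a strict SM functor $\tilde c: \Cob{1} \to C$ together with a monoidal natural isomorphism $\tilde c \circ i \cong \bar c$, so in particular $\tilde c(\ud{1}) \cong c$. Because strict SM functors preserve tensor products, units, and morphisms on the nose, $\tilde c$ carries the triangle identities satisfied by the duality datum $(\ud{1}^\bullet, \eta_{\ud{1}}, \epsilon_{\ud{1}})$ in $\Cob{1}$ to analogous identities in $C$, exhibiting $\tilde c(\ud{1}^\bullet)$ as a dual of $\tilde c(\ud{1})$; transporting along the chosen isomorphism $\tilde c(\ud{1}) \cong c$ produces a dual for $c$. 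Since $c$ was arbitrary, $C$ is compact closed.

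The main obstacle is the first identification: one must verify that $\HMapC{X}{C}{\PCat}$, for $X$ either of our two cofibrant objects, is modeled by the nerve of the groupoid core of $\StrSMHom{X}{C}$. This rests on the behavior of cylinder and path objects in the natural model structure on $\PCat$ and is essentially folklore for category-like model structures, but must be stated carefully. Once it is in hand, the proof reduces to the universal property of $\Cob{1}$ and the trivial observation that a strict symmetric monoidal functor automatically sends a duality datum to a duality datum.
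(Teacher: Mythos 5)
Your proof is correct and follows essentially the same route as the paper: both identify the homotopy function complex with the groupoid core $J(\StrSMHom{-}{C})$ of the strict hom-category, derive the forward direction from the universal property of $\Cob{1}$ (Remark~\ref{univ-prop-cob} together with Proposition~\ref{1-D-CH}), and derive the converse from essential surjectivity of $i^*$ plus the observation that a strict symmetric monoidal functor carries a duality datum to a duality datum. You spell out the transport-along-isomorphism step in the converse a bit more explicitly than the paper does, but that is a matter of exposition, not substance.
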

\begin{proof}
	For any compact closed category $C$, it follows from remark \ref{univ-prop-cob} and the argument in the proof of proposition \ref{1-D-CH} that the following map, which is the evaluation map on the generator, is an equivalence of groupoids:
	\[
	J\left(\StrSMHom{i}{C}\right):J\left(\StrSMHom{\Cob{1}}{C}\right) \to J\left(\StrSMHom{\F^\otimes(\ud{1})}{C}\right) \cong J(C)
	\]
	where $J$ is the right adjoint to the inclusion map $\iota:\gpd \to \Cat$.
	Thus each permutative compact closed category is an $\lbrace i \rbrace$-local object. 
	Conversely, let us assume that $C$ is an $\lbrace i \rbrace$-local object. We recall that for any category $D$ we have the following equality of object sets: $Ob(D) = Ob(J(D))$. By assumption the functor $J\left(\StrSMHom{i}{C}\right)$ is an equivalence of groupoids which now implies that each object of $C$ is isomorphic to some object in the image of $J\left(\StrSMHom{i}{C}\right)$. Now the result follows from the observation that each object in the image of $J\left(\StrSMHom{i}{C}\right)$ has a dual.
\end{proof}

\begin{df}
	A map of permutative categories $F:C \to D$ will be called a compact closed equivalence of permutative categories if it is a $\lbrace i \rbrace$-local equivalence.
\end{df}

\begin{rem}
	A strict symmetric monoidal functor $F:C \to D$ between cofibrant permutative categories is a compact closed equivalence if
	the following functor is an equivalence of groupoids:
	\[
	J(\StrSMHom{F}{E}):J(\StrSMHom{D}{E}) \to J(\StrSMHom{C}{E})
	\]
	for each permutative compact closed category $E$.
\end{rem}

Now we state and prove the main result of this section which is is regarding the construction of a model category of compact closed categories. The proof uses an existence theorem of a left Bousfield localization of a (class of) model category which is reviewed in appendix \ref{loc-Mdl-Cats}:
\begin{thm} 
	\label{nat-model-str-Perm}
	There is a model category structure on the category of all small
	permutative categories and strict symmetric monoidal functors $\PCat$ in which
	\begin{enumerate}
		\item A cofibration is a strict symmetric monoidal functor which is a cofibration in the natural model category $\PCat$.
		\item A weak-equivalence is a compactly closed equivalence of permutative categories.
		\item A fibration is a strict symmetric monoidal functor having the right lifting property with respect to all maps which are both cofibrations and weak equivalences.
	\end{enumerate}
	Further, this model category structure is combinatorial and left-proper.
\end{thm}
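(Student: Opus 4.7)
The plan is to invoke the existence theorem for left Bousfield localizations of combinatorial left-proper model categories, recalled in Appendix \ref{loc-Mdl-Cats}. The input consists of the natural model category on $\PCat$ together with the singleton set $S = \{i\}$, where $i: \F^\otimes(\ud{1}) \to \Cob{1}$ is the inclusion of Remark \ref{Fr-SM-CC-incl}. The output will then be, by construction, a model structure with the same cofibrations as the natural one, whose weak equivalences are the $S$-local equivalences (that is, the compactly closed equivalences by definition) and whose fibrations are those strict symmetric monoidal functors with the right lifting property against trivial cofibrations. So matching the three clauses of the statement reduces entirely to the verification of the hypotheses of the localization theorem.

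First I would check that the natural model category $\PCat$ is combinatorial. The underlying category $\PCat$ is locally presentable (it is the category of algebras for a finitary $2$-monad on $\Cat$), and the natural model structure is known to be cofibrantly generated with small generating sets of cofibrations and trivial cofibrations (this was established in \cite{sharma5} and is the content needed for combinatoriality). Next I would verify left-properness of the natural model structure; this follows from the fact that every permutative category is cofibrant in the natural model structure, so the standard criterion applies (pushouts of weak equivalences along cofibrations between cofibrant objects are weak equivalences, and every pushout along a cofibration is such a pushout).

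With these two properties in hand, the Bousfield localization theorem produces the desired localized model structure $(\PCat, \{i\})$. Both cofibrancy of the domain and codomain of $i$ (needed to express localness via the derived mapping space as in the remark preceding Proposition \ref{CC-cats-loc-obj}) is ensured by Remark \ref{Fr-SM-CC-incl}, while the identification of the fibrant objects as compact closed categories is exactly Proposition \ref{CC-cats-loc-obj}. Finally, combinatoriality and left-properness are both preserved under left Bousfield localization at a set of maps in a combinatorial left-proper model category, so the localized structure inherits them.

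The main obstacle is really a bookkeeping one rather than a conceptual one: one must either cite or reprove the combinatoriality and left-properness of the natural model structure on $\PCat$, and ensure that the version of Bousfield localization recalled in Appendix \ref{loc-Mdl-Cats} is stated at the level of generality (combinatorial, rather than cellular) needed here, so that no small object argument needs to be rerun by hand. Once those inputs are in place, the proof is a direct application of the theorem.
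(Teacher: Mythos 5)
Your proposal is correct and matches the paper's proof essentially verbatim: the paper also obtains the model structure as the left Bousfield localization of the left-proper, combinatorial natural model structure on $\PCat$ at the single map $i:\F^\otimes(\ud{1}) \to \Cob{1}$, citing Theorem \ref{local-tool} for existence and the identification of cofibrations and weak equivalences. The extra verifications you flag (combinatoriality and left-properness of the natural model structure, cofibrancy of the domain and codomain of $i$) are exactly the background facts the paper invokes implicitly, so there is no substantive divergence.
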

\begin{proof}
	The model category structure is a left Bousfield localization of the left-proper, combinatorial natural model category structure on $\PCat$ with respect to a single map $i:\F^\otimes(\ud{1}) \to \Cob{1}$.
	The existence of this left Bousfield localization and the characterization of cofibrations and weak-equivalences follows from \ref{local-tool}.
	\end{proof}
	\begin{nota}
		We denote the above model category by $\PCatCC$.
		\end{nota}

	\section[Coherently compact closed categories]{Coherently compact closed categories}
\label{cc-gCat}
In this section we will construct another model category  structure on the category of $\gCats$ $\gCAT = [\Gamma^{op}, \Cat]$. The main result of this section is that the thickened Segal's nerve functor $\Kbar$ is a right Quillen functor of a Quillen equivalence between the model category of coherently compact closed categories, which will be constructed in this section, and the model category of compact closed permutative categories $\PCatCC$ constructed in the previous section. We construct the desired model category as a left Bousfield localization of the model category of coherently commutative monoidal categories constructed in \cite{Sharma}, which we denote by $\gCATCCM$.
%
We begin by briefly recalling that the thickened Segal's nerve functor $\Kbar:\PCat \to \gCAT$ constructed in \cite{Sharma}:
\begin{df}
	\label{P-Nat-n}
	  \begin{sloppypar}
	For each $n \in \Nat$ we will now define a permutative groupoid $\PNat(n)$.
		We will denote an object of this groupoid by $(f_1, f_2, \dots, f_r)$. A morphism $(f_1, f_2, \dots, f_r) \to (g_1, g_2, \dots, g_k)$ is an
		isomorphism of finite sets
		\[
		F:\Supp{f_1} \sqcup \Supp{f_2} \sqcup \dots \sqcup \Supp{f_r} \overset{\cong} \to \Supp{g_1} \sqcup \Supp{g_2} \sqcup \dots \sqcup \Supp{g_k}
		\]
		such that the following diagram commutes
	\end{sloppypar}
	\begin{equation*}
	\xymatrix{
		\Supp{f_1} \sqcup \dots \sqcup \Supp{f_r} \ar[rr]^{F} \ar[rd] && \Supp{g_1} \sqcup \dots \sqcup \Supp{g_k} \ar[ld] \\
		&\ud{n}
	}
	\end{equation*}
	where the diagonal maps are uniquely determined by the inclusions of components of the coproducts into $\ud{n}$ and $\Supp{f}$ denotes the support of the based map $f$.
\end{df}
The (thickened) Segal's nerve functor is now defined, in degree $n$ as follows:
\[
\Kbar(C)(n^+) := \StrSMHom{\PNat(n)}{C},
\]
where $C$ is a permutative category. The functor $\Kbar:\PCat \to \gCAT$ has a left adjoint, denoted $\PNat$, see \cite[Sec. 6]{Sharma}. 
The thickened Segal's nerve of the free compact closed category on one generator $\Cob{1}$, denoted by $\Kbar({\Cob{1}})$, is equipped with an inclusion map 
\begin{equation}
\label{inc-j}
j:\gn{1} \to \Kbar({\Cob{1}})
\end{equation}

 which is determined by the generator of $\Cob{1}$. 
 
 \begin{rem}
 	\label{equiv-with--Seg-Ner-fr-bord}
 	The coherently commutative monoidal category $\Kbar({\Cob{1}})$ is equivalent to the thickened Segal's nerve of the framed $1$-Bordism (symmetric monoidal) category described in remark \eqref{equiv-with-fr-bord}.
 \end{rem}

%
%
\begin{df}
	\label{cc-gCategory}
	A coherently commutative monoidal category $X$ is called a \emph{ coherently compact closed} category if the symmetric monoidal category $X(1^+)$ is a (not necessarily permutative) compact closed category.
	\end{df}

\begin{df}
	We will refer to the coherently compact closed category $\Kbar({\Cob{1}})$ as the thickened Segal's nerve of the algebraic $1$-Bordism category.
\end{df}

\begin{df}
	A $\lbrace j \rbrace$-local equivalence will be called a compact closed equivalence of $\gCats$.
	\end{df}
\begin{thm}
	\label{mdl-CC-GCat}
	There is a left-proper, combinatorial model category structure on the category $\gCAT$ wherein a map is a
	\begin{enumerate}
		\item cofibration if it is a strict cofibration of $\gCats$, namely a cofibration in the strict model category of $\gCats$.
		
		\item weak-equivalence if it is a compact closed equivalence of $\gCats$.
		\item a fibration if it has the right lifting property with respect to maps which are simultaneously cofibrations and weak-equivalences.
		\end{enumerate}
	
	\end{thm}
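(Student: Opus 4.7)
The proof plan mirrors, almost verbatim, the construction used for Theorem \ref{nat-model-str-Perm}. The idea is to obtain the desired model structure as a left Bousfield localization of the model category $\gCATCCM$ of coherently commutative monoidal categories (constructed in \cite{Sharma}) with respect to the singleton set $S = \lbrace j \rbrace$, where $j:\gn{1} \to \Kbar(\Cob{1})$ is the inclusion map \eqref{inc-j}.

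First I would invoke the fact, established in \cite{Sharma}, that $\gCATCCM$ is a left-proper, combinatorial model category structure on the underlying category $\gCAT$ whose cofibrations are the strict cofibrations. This puts us in the hypothesis of the general existence theorem for left Bousfield localizations recalled in Appendix \ref{loc-Mdl-Cats} (and specifically in \ref{local-tool}), which requires left-properness and combinatoriality of the ambient model structure together with a small set of maps to be inverted. Applying that theorem to $\gCATCCM$ with respect to $S = \lbrace j \rbrace$ yields a new model structure on $\gCAT$ that is again left proper and combinatorial, in which the cofibrations are unchanged (so they remain strict cofibrations of $\gCats$) and the fibrations are characterized by the right lifting property against cofibrations that are weak equivalences. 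This gives items (1) and (3) immediately.

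For item (2), I would observe that by construction the weak equivalences of the localized model category are precisely the $S$-local equivalences, which in our case are the $\lbrace j \rbrace$-local equivalences; by the definition given just above the theorem, these are exactly the compact closed equivalences of $\gCats$. Hence (2) holds by definition.

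The only potentially delicate point is checking that the hypotheses of the localization theorem are genuinely satisfied — concretely, that $\gCATCCM$ is left proper and combinatorial and that $\lbrace j \rbrace$ is a legitimate set of maps between (small) objects in $\gCAT$. Both facts are in place: the first is the main existence theorem of \cite{Sharma}, and the second is immediate since $\gn{1}$ and $\Kbar(\Cob{1})$ are small $\gCats$. I do not anticipate any obstacle beyond citing these prerequisites; the proof therefore reduces to a single application of \ref{local-tool} exactly parallel to the argument used for $\PCatCC$ in Theorem \ref{nat-model-str-Perm}.
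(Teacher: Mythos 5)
Your proposal is correct and takes essentially the same route as the paper: a single application of the left Bousfield localization existence theorem (\ref{local-tool}) to the left-proper, combinatorial model structure $\gCATCCM$ on $\gCAT$ with respect to the singleton $\lbrace j \rbrace$, with cofibrations, weak equivalences and fibrations then read off directly from that theorem. You have also silently corrected an apparent slip in the paper's own proof, which says it localizes the natural model structure on $\PCat$ rather than $\gCATCCM$, even though the discussion immediately preceding the theorem makes the intended ambient model category clear.
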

\begin{proof}
	The model category structure is obtained by carrying out a left Bousfield localization of the natural model category structure on $\PCat$ with respect to $\lbrace j \rbrace$, this follows from \cite[Thm. 2.11]{CB1}. The same theorem implies that the model category is combinatorial and left-proper.
	\end{proof}
\begin{nota}
	We denote the above model category by $\gCatCC$ and refer to it as the model category of coherently compact closed categories.
	\end{nota}

\begin{lem}
	\label{Quil-Pair-KSeg-PNat}
	The adjoint pair $(\PNat, \Kbar)$ is a Quillen pair between the model category $\PCatCC$ and the model category $\gCatCC$.
\end{lem}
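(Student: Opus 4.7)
The plan is to invoke the standard descent criterion for Quillen adjunctions through left Bousfield localizations: given a Quillen adjunction $F \dashv G \colon M \rightleftarrows N$ with left Bousfield localizations $L_S M$ and $L_T N$, the adjunction descends to a Quillen adjunction $L_S M \rightleftarrows L_T N$ provided $F$ sends (a cofibrant replacement of) each $s \in S$ to a $T$-local equivalence in $N$. In our setting, $(\PNat, \Kbar)$ is already a Quillen equivalence between the natural model structure $\PCat$ and $\gCATCCM$ by \cite{Sharma}, and the localizations $\PCatCC$ and $\gCatCC$ are each generated by one map. Thus the task reduces to verifying that $\PNat(j)$ is a compact closed equivalence in $\PCat$, noting that $\gn{1}$ is cofibrant as a representable, so no replacement of the source is needed.

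The crux is to identify $\PNat(j)$ with the generator $i$ up to weak equivalence in $\PCat$. The map $j \colon \gn{1} \to \Kbar(\Cob{1})$ from \eqref{inc-j} was defined as the classifying map of the generator of $\Cob{1}$: by Yoneda, $\Hom_{\gCAT}(\gn{1}, \Kbar(\Cob{1})) \cong \Kbar(\Cob{1})(1^+) = \StrSMHom{\PNat(1)}{\Cob{1}}$, and the element picked out by the generator of $\Cob{1}$ is precisely $i \colon \F^\otimes(\ud{1}) \to \Cob{1}$ once we identify $\PNat(1) \cong \F^\otimes(\ud{1})$. Under the adjunction $\PNat \dashv \Kbar$, this produces a commutative square
\begin{equation*}
\xymatrix{
\PNat(\gn{1}) \ar[r]^{\PNat(j)} \ar[d]_{\cong} & \PNat\Kbar(\Cob{1}) \ar[d]^{\epsilon_{\Cob{1}}} \\
\F^\otimes(\ud{1}) \ar[r]_{i} & \Cob{1}
}
\end{equation*}
where $\epsilon$ is the counit of the adjunction.

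Since $(\PNat, \Kbar)$ is a Quillen equivalence and $\Cob{1}$ is cofibrant in $\PCat$, the derived counit at $\Cob{1}$ is a weak equivalence in $\PCat$; after (if necessary) cofibrantly replacing $\Kbar(\Cob{1})$ in $\gCATCCM$, the square above exhibits $\PNat(j)$ as weakly equivalent in $\PCat$ to $i$. In particular $\PNat(j)$ is an $\{i\}$-local equivalence, trivially so since $i$ itself is the generator of the localization, which is exactly what the descent criterion demands. The main obstacle lies in the bookkeeping to ensure the square genuinely commutes—that is, that the ``generator of $\Cob{1}$'' construction of $j$ coincides with the adjunct of $i$, and that $\PNat(\gn{1}) \cong \F^\otimes(\ud{1})$ under Definition \ref{P-Nat-n}. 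These identifications are routine unwindings of the universal properties at play, together with the construction of $\PNat$ as the left adjoint to $\Kbar$.
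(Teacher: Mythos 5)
Your proof is correct, but it takes a genuinely different route from the paper's. The paper invokes the purely formal criterion of \cite[Prop.~E.2.14]{AJ1}: an adjunction is a Quillen pair provided the left adjoint preserves cofibrations and the right adjoint preserves fibrations between fibrant objects. Since left Bousfield localization leaves cofibrations untouched on both sides, $\PNat$ still preserves cofibrations; and fibrations between fibrant objects in $\gCatCC$ are just strict fibrations of $\gCats$, which $\Kbar$ preserves. That argument never touches the localizing maps $i$ and $j$ at all. Your approach instead uses the descent criterion for Quillen adjunctions through Bousfield localizations, reducing everything to the claim that $L\PNat(j)$ is an $\{i\}$-local equivalence, which you verify by exhibiting the adjunction square relating $\PNat(j)$, the counit $\epsilon_{\Cob{1}}$, and $i$. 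This is more concrete and in some ways more informative: the identification of $\PNat(j)$ with $i$ up to weak equivalence is precisely what the paper uses (implicitly) a few lines later in the proof of Theorem \ref{char-CC-gCat}, so your verification earns its keep. The one place where you are a bit too brisk is the potential non-cofibrancy of $\Kbar(\Cob{1})$. To nail it down, factor $j$ as a cofibration $\tilde{j}:\gn{1}\to Z$ followed by a trivial fibration $p:Z\to\Kbar(\Cob{1})$; the adjunct $\epsilon_{\Cob{1}}\circ\PNat(p)$ is then a weak equivalence in $\PCat$ because $(\PNat,\Kbar)$ is a Quillen equivalence between $\PCat$ and $\gCATCCM$ with $Z$ cofibrant and $\Cob{1}$ fibrant, and two-out-of-three applied to $\epsilon_{\Cob{1}}\circ\PNat(p)\circ\PNat(\tilde{j})=i$ (under your identification) shows $L\PNat(j)=\PNat(\tilde{j})$ is an $\{i\}$-local equivalence.
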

\begin{proof}
	We recall from above that the model category $\gCatCC$ is a left Bousfield localization of the  model category of coherently commutative monoidal categories $\gCatSM$ therefore it has the same cofibrations as $\gCatSM$, namely $Q$-cofibrations. Since the adjoint pair in context is a Quillen pair between $\PCat$ and $\gCatSM$ therefore the left adjoint $\PNat$ preserves cofibrations between the two model categories in the context of the theorem. The fibrations between fibrant objects in $\gCatCC$ are strict fibrations of $\gCats$ which are preserved by $\Kbar$. Now \cite[Prop. E.2.14]{AJ1} tells us that the adjoint pair $(\PNat, \Kbar)$ is a Quillen pair between $\PCatCC$ and $\gCatCC$.
\end{proof}
\begin{rem}
	\label{K-L-Str-QE}
	An argument similar to the proof of the above lemma shows that the adjoint pair $(\PStr, \KSeg)$ defined in \cite[Sec. 5]{Sharma} is a also a Quillen pair.
\end{rem}
\begin{thm}
	\label{char-CC-gCat}
	A coherently commutative monoidal category $X$ is a $\lbrace j \rbrace$-local object if and only if $\PNat(X)$ is a compact closed permutative category.
\end{thm}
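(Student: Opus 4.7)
The plan is to use the Quillen equivalence $(\PNat, \Kbar)$ between $\PCat$ and $\gCATCCM$ (from \cite{Sharma}) to translate $\{j\}$-locality of $X$ into $\{i\}$-locality of $\PNat(X)$, and then to conclude via Proposition \ref{CC-cats-loc-obj}, which identifies the latter with compact closedness.

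First, by the standard characterization of local objects, the coherently commutative monoidal category $X$ (which is fibrant in $\gCATCCM$) is $\{j\}$-local iff the induced map
$$j^* : \HMapC{\Kbar(\Cob{1})}{X}{\gCATCCM} \to \HMapC{\gn{1}}{X}{\gCATCCM}$$
is a weak equivalence. Since $j$ is (up to equivalence) the adjunct of $i$ under $\PNat \dashv \Kbar$---using the standard identification $\PNat(\gn{1}) \cong \F^\otimes(\ud{1})$---we may factor $j$ as $\Kbar(i) \circ \eta_{\gn{1}}$, where $\eta$ is the unit of the adjunction. As $\gn{1}$ is cofibrant in $\gCATCCM$ and all objects of $\PCat$ are fibrant, the Quillen equivalence forces $\eta_{\gn{1}} : \gn{1} \to \Kbar(\F^\otimes(\ud{1}))$ to be a weak equivalence in $\gCATCCM$. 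Consequently $j^*$ is a weak equivalence iff $\Kbar(i)^*$ is.

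Next, the Quillen equivalence yields a natural weak equivalence of derived mapping spaces
$$\HMapC{\Kbar(C)}{X}{\gCATCCM} \simeq \HMapC{C}{\PNat(X)}{\PCat}$$
for cofibrant $C \in \PCat$ and fibrant $X \in \gCATCCM$. This is obtained by first replacing $X$ by $\Kbar(\PNat(QX))$ via the derived unit (a weak equivalence by the Quillen equivalence, where $QX \to X$ is a cofibrant replacement), and then using that $\Kbar$ is homotopically fully faithful together with $\PNat(\Kbar(C)) \simeq C$. Applying this identification at $C = \Cob{1}$ and $C = \F^\otimes(\ud{1})$, both cofibrant by Remark \ref{Fr-SM-CC-incl}, transforms $\Kbar(i)^*$ into the map
$$i^* : \HMapC{\Cob{1}}{\PNat(X)}{\PCat} \to \HMapC{\F^\otimes(\ud{1})}{\PNat(X)}{\PCat}.$$
Therefore $X$ is $\{j\}$-local iff $\PNat(X)$ is $\{i\}$-local, which by Proposition \ref{CC-cats-loc-obj} holds iff $\PNat(X)$ is compact closed.

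The principal technical obstacle lies in the derived-adjunction identification used in the third paragraph: since $\PNat$ is only a left Quillen functor, writing $\PNat(X)$ rather than $\PNat(QX)$ requires that $X$ can be taken cofibrant, or that $\PNat$ preserves the relevant weak equivalences between fibrant objects---a point that must be verified carefully or interpreted in the derived sense. Once this is in place, naturality of the adjunction isomorphism guarantees that the map induced by $\Kbar(i)$ corresponds to the map induced by $i$, making the remainder of the argument routine.
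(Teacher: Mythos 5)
Your outline is the same in spirit as the paper's: reduce $\{j\}$-locality of $X$ to $\{i\}$-locality of $\PNat(X)$ via the $(\PNat,\Kbar)$ adjunction and then invoke Proposition \ref{CC-cats-loc-obj}. But the gap you flag at the end is precisely the gap, and it cannot be dispatched by general Quillen-equivalence formalism. A coherently commutative monoidal $X$ is fibrant in $\gCatSM$ but need not be cofibrant, so the derived unit only tells you about $\PNat(QX)$. What the paper uses to close this is a result specific to this adjunction: the unit $\eta_X : X \to \Kbar\PNat(X)$ is a \emph{strict} (levelwise) equivalence of $\gCats$ for \emph{every} coherently commutative monoidal category $X$, cofibrant or not (this is \cite[Lem.\ 6.15]{Sharma}, cited in the forward direction and used implicitly in the backward direction). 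Once you have that, you can replace $X$ by $\Kbar\PNat(X)$ at the top of your diagram and transport the adjunction entirely through the already-fibrant-and-cofibrant object $\PNat(X)$ in $\PCat$, exactly as the paper's two diagrams do. Without quoting that lemma, your argument proves the ``iff'' for $\PNat(QX)$, which is not the stated theorem.

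Two smaller points of imprecision worth noting. First, the identification $\PNat(\gn{1}) \cong \F^\otimes(\ud{1})$ you invoke is not an isomorphism; the paper only has an equivalence of categories $\iota : \F(\ud{1}) \to \PNat(\gn{1})$, and accordingly the paper's second diagram compares $\HMapC{\PNat(j)}{\PNat(X)}{\PCatCC}$ with $\HMapC{i}{\PNat(X)}{\PCatCC}$ via the counit $\epsilon$ and via $\iota$ rather than via a strict factorization $j = \Kbar(i)\circ\eta_{\gn{1}}$. This is easily repaired (an equivalence suffices to induce a homotopy equivalence on function complexes), but it does mean your displayed factorization of $j$ does not literally typecheck and should be replaced by the weaker ``up to equivalence'' relation, as in the paper's introduction. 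Second, the claim that $\HMapC{\Kbar(C)}{X}{\gCatSM} \simeq \HMapC{C}{\PNat(X)}{\PCat}$ is exactly the ``wrong-sided'' adjunction that forces the passage through $\PNat(QX)$ and hence again requires \cite[Lem.\ 6.15]{Sharma}; the paper instead works with $\Kbar\PNat(X)$ on the left-hand side from the start, keeping the adjunction on its natural side. So: the approach is sound, but you must explicitly import \cite[Lem.\ 6.15]{Sharma} to make the proof complete.
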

\begin{proof}
	Let us first assume that $\PNat(X)$ is a compact closed category, then $\Kbar\PNat(X)$ is a $\lbrace j \rbrace$-local object because the adjunction $(\PNat, \Kbar)$ is a Quillen pair, by lemma \eqref{Quil-Pair-KSeg-PNat}, and a right Quillen functor preserves fibrant objects. Further, the unit map $\eta_X:X \to \Kbar\PNat(X)$ is a strict equivalence of $\gCats$ \cite[lem. 6.15]{Sharma}. This implies that $X$ is a $\lbrace j \rbrace$-local object because $\Kbar\PNat(X)$ is one.
	
	Conversely, let us assume that $X$ is a $\lbrace j \rbrace$-local object. We consider the following commutative diagram:
	\begin{equation*}
	\xymatrix@C=24mm{
		\HMapC{\Kbar(\Cob{1})}{\Kbar\PNat(X)}{\gCatCC} \ar[r]^{ \ \ \ \ \HMapC{j}{\Kbar\PNat(X)}{\gCatCC}} \ar[d]_\cong & \HMapC{\gn{1}}{\Kbar\PNat(X)}{\gCatCC} \ar[d]^\cong \\
		\HMapC{\PNat(\Kbar(\Cob{1}))}{\PNat(X)}{\PCatCC} \ar[r]_{ K } & \HMapC{\PNat(\gn{1})}{\PNat(X)}{\PCatCC}  
	}
	\end{equation*}
	where the bottom horizontal map $K = \HMapC{\PNat(j)}{\PNat(X)}{\PCatCC}$. Since $X$ is a $\lbrace j \rbrace$-local object by assumption, the $\gCat$ $\Kbar\PNat(X)$ is also one. Thus the top row is a homotopy equivalence of Kan complexes. By the two out of three property of weak-equivalences in a model category, $K$ is a homotopy equivalence of Kan-complexes.
 Now the following commutative diagram implies that $\PNat(X)$ is a compact closed category:
	\begin{equation*}
	\xymatrix@C=32mm{
		\HMapC{\PNat(\Kbar\Cob{1})}{\PNat(X)}{\PCatCC} 
		\ar[r]^{ \ \ \ \ \HMapC{\PNat(j)}{\PNat(X)}{\PCatCC}}  & \HMapC{\PNat(\gn{1})}{\PNat(X)}{\PCatCC} \ar[d]^T \\
		\HMapC{\Cob{1}}{\PNat(X)}{\PCatCC}  \ar[u]^{\HMapC{\epsilon}{\PNat(X)}{\PCatCC}} \ar[r] & \HMapC{\F(\ud{1})}{\PNat(X)}{\PCatCC}
	}
	\end{equation*}
	where $\epsilon$ is the counit map which is a weak equivalence in the natural model category $\PCat$. The downward map $T = \HMapC{\iota}{\PNat(X)}{\PCatCC}$ is a homotopy equivalence of Kan complexes because the inclusion functor $\iota:\F(\ud{1}) \to \PNat(\gn{1})$ is an equivalence of categories.
	
\end{proof}

\begin{coro}
	\label{j-loc-ccc-cat}
	An $\gCat$ $X$ is a fibrant object of the model category $\gCatCC$ if and only if it is a coherently compact closed $\gCat$.
\end{coro}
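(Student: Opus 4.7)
The plan is to combine Theorem \ref{char-CC-gCat} with the general theory of left Bousfield localization and with the symmetric monoidal comparison between $\PNat(X)$ and $X(1^+)$.

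First I would unpack fibrancy in $\gCatCC$. Since $\gCatCC$ was produced in Theorem \ref{mdl-CC-GCat} as the left Bousfield localization of $\gCatSM$ at the singleton $\{j\}$, the standard characterization of fibrant objects in a left Bousfield localization (recalled in Appendix \ref{loc-Mdl-Cats}) tells us that $X \in \gCAT$ is fibrant in $\gCatCC$ if and only if $X$ is fibrant in $\gCatSM$ — equivalently, a coherently commutative monoidal category — and $X$ is $\{j\}$-local.

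Next I would apply Theorem \ref{char-CC-gCat} directly: for such an $X$, being $\{j\}$-local is equivalent to $\PNat(X)$ being a compact closed permutative category. This reduces the corollary to the assertion that, for a coherently commutative monoidal $X$, the permutative category $\PNat(X)$ is compact closed if and only if the symmetric monoidal category $X(1^+)$ is, the right-hand side being, by Definition \ref{cc-gCategory}, the defining condition for $X$ to be coherently compact closed.

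The last step rests on two ingredients. First, when $X$ is coherently commutative monoidal, the unit map $\eta_X \colon X \to \Kbar\PNat(X)$ is a strict equivalence of $\gCats$ (\cite[Lem.\ 6.15]{Sharma}, already invoked in the proof of Theorem \ref{char-CC-gCat}); evaluating at $1^+$ furnishes a symmetric monoidal equivalence of categories between $X(1^+)$ and $\Kbar(\PNat X)(1^+)$, and the latter is canonically equivalent to $\PNat(X)$ since $\PNat(1)$ is the free permutative category on one generator. Second, the property of admitting duality data $(c^\bullet, \eta_c, \epsilon_c)$ as in Definition \ref{CC-cats} is transportable along any symmetric monoidal equivalence of categories (the unit and counit can be pulled back and pasted with the comparison isomorphisms, and the triangle identities still hold). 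Chaining these two observations with the two earlier reductions yields the desired equivalence in both directions. The only delicate point I anticipate is verifying that the equivalence $X(1^+) \simeq \PNat(X)$ is genuinely symmetric monoidal and not merely an equivalence of underlying categories, but this is built into the symmetric monoidal nature of the Quillen pair $(\PNat, \Kbar)$ established in \cite{Sharma}.
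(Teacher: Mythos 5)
Your proposal is correct and takes essentially the same route as the paper: unpack fibrancy in the Bousfield localization, invoke Theorem \ref{char-CC-gCat} to reduce to the compact closedness of $\PNat(X)$, then transport compact closedness across the symmetric monoidal equivalence $X(1^+)\simeq\Kbar(\PNat X)(1^+)\simeq \PNat(X)$ coming from the unit map $\eta_X$ and evaluation. The paper's written proof only spells out the forward direction, while you handle both explicitly, but the underlying argument is the same.
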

\begin{proof}
	We begin by recalling that for each coherently commutative monoidal category $Z$, its degree one category $Z(1^+)$ inherits a symmetric monoidal category structure \cite[Prop. 3.3.1.]{Leinster}.
	A $\gCat$ $X$ is fibrant in the model category $\gCatCC$ if and only if it is a coherently commutative monoidal category and a $\lbrace j \rbrace$-local object. In this case, the unit map $\eta_X:X \to \Kbar(\PNat(X))$ is a strict equivalence of $\gCats$. Now we have we have the following commutative diagram of equivalence of categories:
	\[
	\xymatrix{
	X(1^+) \ar[r]^{\eta_X(1^+) \ \ \ \ \ \ } \ar[rd]_{i_X} & \Kbar(\PNat(X))(1^+)  \ar[d]^{ev}\\
	& \PNat(X) 
    }
	\]
 This diagram implies that $X(1^+)$ is compact closed because it is equivalent to a compact closed category $\PNat(X)$ via the (symmetric monoidal) functor $i_X$.	

\end{proof}

\begin{coro}
	\label{char-coh-cc-cat}
	The thickened Segal's nerve $\Kbar({C})$ of a compact closed permutative category $C$ is a coherently compact closed category.
\end{coro}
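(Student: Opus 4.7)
The plan is to deduce this corollary quickly from the two results immediately preceding it, namely Lemma \ref{Quil-Pair-KSeg-PNat} and Corollary \ref{j-loc-ccc-cat}, by running the usual ``right Quillen functor preserves fibrant objects'' slogan. By Corollary \ref{j-loc-ccc-cat}, it suffices to show that $\Kbar(C)$ is fibrant in the model category $\gCatCC$. By Lemma \ref{Quil-Pair-KSeg-PNat}, the functor $\Kbar$ is right Quillen from $\PCatCC$ to $\gCatCC$, so it preserves fibrant objects. Therefore the whole question reduces to verifying that $C$ is fibrant in $\PCatCC$.

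To verify this, I would appeal directly to the construction of $\PCatCC$ as a left Bousfield localization (Theorem \ref{nat-model-str-Perm}): its fibrant objects are precisely those permutative categories which are fibrant in the natural model category on $\PCat$ and, in addition, $\lbrace i \rbrace$-local. Every object of the natural model category on $\PCat$ is fibrant, so the first condition is automatic. For the second condition, Proposition \ref{CC-cats-loc-obj} characterizes $\lbrace i \rbrace$-local permutative categories as exactly the compact closed permutative categories, which is precisely our hypothesis on $C$. Combining these observations, $C$ is fibrant in $\PCatCC$, hence $\Kbar(C)$ is fibrant in $\gCatCC$, and hence $\Kbar(C)$ is coherently compact closed by Corollary \ref{j-loc-ccc-cat}.

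There is essentially no obstacle here beyond assembling the right prior results; the genuine work has already been invested in proving the $(\PNat,\Kbar)$ Quillen adjunction and in Theorem \ref{char-CC-gCat}, of which Corollary \ref{j-loc-ccc-cat} is the direct packaging. An alternative route, which I would mention but not pursue, is to apply Theorem \ref{char-CC-gCat} directly: since $\Kbar(C)$ is known to be a coherently commutative monoidal category (cf.\ \cite{Sharma}), one would need $\PNat(\Kbar(C))$ to be compact closed permutative, which follows because the counit $\PNat\Kbar(C) \to C$ is a natural equivalence of permutative categories and compact closedness is invariant under symmetric monoidal equivalence. The Quillen-pair route is cleaner, so I would use it.
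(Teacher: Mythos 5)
Your main argument is correct, and it takes a genuinely different route from the paper's own proof. You argue purely at the level of the localization machinery: $C$ compact closed $\Rightarrow$ $C$ is $\lbrace i\rbrace$-local (Proposition \ref{CC-cats-loc-obj}) $\Rightarrow$ $C$ fibrant in $\PCatCC$ (all permutative categories are fibrant in the natural model structure) $\Rightarrow$ $\Kbar(C)$ fibrant in $\gCatCC$ (right Quillen functor from Lemma \ref{Quil-Pair-KSeg-PNat}) $\Rightarrow$ $\Kbar(C)$ coherently compact closed (Corollary \ref{j-loc-ccc-cat}). Each step is licensed by a result already established, and the logic is airtight. The paper instead uses what you call the ``alternative route'': it notes that the counit $\epsilon_C\colon\PNat\Kbar(C)\to C$ is an equivalence of permutative categories, infers that $\PNat\Kbar(C)$ is compact closed (compact closedness being invariant under symmetric monoidal equivalence), and then invokes Theorem \ref{char-CC-gCat} together with the fact that $\Kbar(C)$ is coherently commutative monoidal. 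The two arguments have the same inputs at bottom, since Theorem \ref{char-CC-gCat} itself is proved using the Quillen pair and the unit equivalence; but your direct route avoids the detour through the counit and Theorem \ref{char-CC-gCat}, reducing the corollary to the bare statement that a right Quillen functor preserves fibrant objects plus the two characterizations of fibrancy. It is, as you say, the cleaner of the two, while the paper's route has the minor advantage of keeping Theorem \ref{char-CC-gCat} visibly in play as the organizing result of the section.
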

\begin{proof}
	The counit map of the adjunction $(\PNat, \Kbar)$ is a natural equivalence of permutative categories therefore $\epsilon_C:\PNat(\Kbar(C)) \to C$ is a weak equivalence in the natural model category $\PCat$. This implies that $\PNat(\Kbar(C))$ is compact closed. Now the corollary follows from the theorem and the observation that $\Kbar(C)$ is a coherently commutative monoidal category.
	
	\end{proof}

\begin{rem}
	The model category of coherently commutative monoidal categories $\gCatSM$ is a $\Cat$-model category. This implies that
	\[ \HMapC{j}{X}{\gCatSM} = J(\MapC{j}{X}{\gCAT}).
	\]
	Now it is easy to see that following statements are equivalent:
	\begin{enumerate}
		\item The inclusion map $j:\gn{1} \to \Kbar(\Cob{1})$ is a weak-equivalence in $\gCatCC$.
		
		\item For any coherently compact closed category $X$, the following map is an equivalence of function spaces:
		\begin{multline*}
		J\MapC{j}{X}{\gCAT}:J\MapC{\Kbar(Cob{1})}{X}{\gCAT} \to J(\MapC{\gn{1}}{X}{\gCAT}) \\ \overset{\cong} \to JN(X(1^+)).
		\end{multline*}
	\end{enumerate}

	\begin{sloppypar}
		\end{sloppypar}
	
\end{rem}

\begin{prop}
	\label{CC-GCat-class-upto-eq}
	A morphism of $\gCats$ $F:X \to Y$ is a compact closed equivalence of $\gCats$ if and only if for each compact closed (permutative) category $Z$ we have the following homotopy equivalence of function complexes:
	\[
	\HMapC{F}{\Kbar(Z)}{\gCatSM}:\HMapC{Y}{\Kbar(Z)}{\gCatSM} \to \HMapC{X}{\Kbar(Z)}{\gCatSM}
	\]
\end{prop}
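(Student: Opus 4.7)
The plan is to reduce both implications to the standard characterization of weak equivalences in a left Bousfield localization: a map in $\gCatSM$ is a $\{j\}$-local equivalence if and only if it induces a weak equivalence on the homotopy function complex $\HMapC{-}{W}{\gCatSM}$ for every fibrant object $W$ of $\gCatCC$. Since $\gCatCC$ is a left Bousfield localization of $\gCatSM$, the two model structures share the same cofibrations, and consequently the homotopy function complex into a target that is already fibrant in $\gCatCC$ can be computed equally well in either model structure. By Corollary \ref{j-loc-ccc-cat} the fibrant objects of $\gCatCC$ are precisely the coherently compact closed $\gCats$, so the criterion above becomes: $F$ is a compact closed equivalence iff $\HMapC{F}{W}{\gCatSM}$ is an equivalence of function complexes for every coherently compact closed $\gCat$ $W$.

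For the forward implication, assume $F$ is a compact closed equivalence. Given any compact closed permutative category $Z$, Corollary \ref{char-coh-cc-cat} shows that $\Kbar(Z)$ is a coherently compact closed $\gCat$, hence a fibrant object of $\gCatCC$, so the reformulation above immediately yields the desired equivalence of function complexes. For the converse, suppose $F$ satisfies the mapping-space condition for every compact closed permutative $Z$, and let $W$ be an arbitrary coherently compact closed $\gCat$. By Theorem \ref{char-CC-gCat} the permutative category $\PNat(W)$ is compact closed, and the unit map $\eta_W : W \to \Kbar(\PNat(W))$ is a strict equivalence of $\gCats$ (\cite[Lem.~6.15]{Sharma}, as used in the proof of Theorem \ref{char-CC-gCat}). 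Since both $W$ and $\Kbar(\PNat(W))$ are fibrant in $\gCatSM$ and $\eta_W$ is a weak equivalence between fibrant objects, postcomposition with $\eta_W$ induces a homotopy equivalence $\HMapC{-}{W}{\gCatSM} \to \HMapC{-}{\Kbar(\PNat(W))}{\gCatSM}$. Applying this to both $X$ and $Y$ and invoking the hypothesis at $Z = \PNat(W)$, together with the two-out-of-three property, gives the required equivalence $\HMapC{F}{W}{\gCatSM}$, completing the verification.

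The only delicate point is the identification of homotopy function complexes with those of the localization on coherently compact closed targets; this is standard for left Bousfield localizations (see Appendix \ref{loc-Mdl-Cats}) but must be cited explicitly so that the hypothesis on $\HMapC{-}{\Kbar(Z)}{\gCatSM}$ really is equivalent to the one on $\HMapC{-}{\Kbar(Z)}{\gCatCC}$ supplied by the general theory. With this identification in hand, the argument becomes a formal manipulation using Theorem \ref{char-CC-gCat}, Corollary \ref{char-coh-cc-cat}, and the weak equivalence $\eta_W$.
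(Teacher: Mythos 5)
Your proof is correct and follows essentially the same route as the paper's: reduce to the characterization of $\{j\}$-local equivalences in terms of homotopy function complexes into coherently compact closed targets, then use Corollary \ref{char-coh-cc-cat} for the forward direction and the strict equivalence $\eta_W : W \to \Kbar\PNat(W)$ (together with Theorem \ref{char-CC-gCat} to know $\PNat(W)$ is compact closed) for the converse. The paper's proof is terser and leaves the two-out-of-three bookkeeping and the model-structure-comparison point implicit; you spell these out, which adds clarity but does not change the argument.
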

\begin{proof}
	$F$ is a compact closed equivalence of $\gCats$ if and only if for each coherently compact closed category $W$, the following map is a homotopy equivalence of Kan complexes:
	\[
	\HMapC{F}{W}{\gCatSM}:\HMapC{Y}{W}{\gCatSM} \to \HMapC{X}{W}{\gCatSM}
	\]
	Since $\Kbar(Z)$ is a coherently compact closed category, by the above corollary, one direction of the statement is obvious.
	
	The other direction of the statement follows from the observation that for each coherently compact closed category $W$, the unit map $\eta_W:W \to \Kbar\PNat(W)$ is a strict equivalence of $\gCats$.
\end{proof}

 It was shown in \cite[Thm. 6.18]{Sharma} that the adjoint pair $(\PNat, \Kbar)$ is a Quillen equivalence between the natural model category of permutative categories $\PCat$ and the model category of coherently commutative monoidal categories $\gCatCC$. This Quillen equivalence is a strict equivalence of the underlying homotopy theories, namely both functors preserve and reflect weak-equivalences of the model categories in context and the unit and the counit maps are natural weak-equivalences.

\begin{lem}
	\label{Seg-Ner-pres-ref-CC-Eq}
	The Segal's nerve functor $\KSeg$ preserves and reflects compact closed equivalences of permutative categories.
	\end{lem}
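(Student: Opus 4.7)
The plan is to derive both implications from Proposition \ref{CC-GCat-class-upto-eq} in combination with the Quillen equivalence $(\PNat, \Kbar)$ between $\PCat$ and $\gCatSM$ recalled at the end of this section, together with the analogous Quillen equivalence $(\PStr, \KSeg)$.

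My first step is to establish a natural homotopy equivalence of function complexes
\[
\HMapC{\KSeg(A)}{\Kbar(Z)}{\gCatSM} \simeq \HMapC{A}{Z}{\PCat}
\]
for every pair of permutative categories $A$ (cofibrantly replaced in $\PCat$) and $Z$. To do this I would apply the derived adjunction of $(\PNat, \Kbar)$ to rewrite the left-hand side as $\HMapC{\mathbf{L}\PNat(\KSeg(A))}{Z}{\PCat}$, and then argue that the derived composite $\mathbf{L}\PNat \circ \KSeg$ is naturally weakly equivalent to the identity on the homotopy category of $\PCat$. Since both $(\PStr, \KSeg)$ and $(\PNat, \Kbar)$ are Quillen equivalences between the same pair of model categories, their right derived functors represent the same equivalence of homotopy categories; in particular $\KSeg(A)$ and $\Kbar(A)$ are weakly equivalent in $\gCatSM$ for each permutative $A$, so $\mathbf{L}\PNat(\KSeg(A)) \simeq \mathbf{L}\PNat(\Kbar(A)) \simeq A$.

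For preservation, let $F: C \to D$ be a compact closed equivalence of permutative categories. The characterization of such equivalences in terms of mapping spaces into compact closed permutative targets yields that $\HMapC{F}{Z}{\PCat}$ is a homotopy equivalence for every compact closed permutative $Z$. Transporting through the natural equivalence above shows that $\HMapC{\KSeg(F)}{\Kbar(Z)}{\gCatSM}$ is a homotopy equivalence for every such $Z$, and Proposition \ref{CC-GCat-class-upto-eq} identifies $\KSeg(F)$ as a compact closed equivalence of $\gCats$. Reflection runs in reverse: if $\KSeg(F)$ is a compact closed equivalence of $\gCats$, Proposition \ref{CC-GCat-class-upto-eq} gives that $\HMapC{\KSeg(F)}{\Kbar(Z)}{\gCatSM}$ is a homotopy equivalence for every compact closed permutative $Z$, and running the natural equivalence in the opposite direction forces $\HMapC{F}{Z}{\PCat}$ to be a homotopy equivalence, whence $F$ is a compact closed equivalence of permutative categories.

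The main obstacle will be the comparison step: establishing that $\mathbf{L}\PNat \circ \KSeg$ is naturally weakly equivalent to the identity on the homotopy category of $\PCat$. Concretely this reduces to exhibiting a natural weak equivalence between the functors $\KSeg$ and $\Kbar$, which I would expect to follow either by producing a direct natural transformation $\KSeg \to \Kbar$ realized as a strict equivalence of $\gCats$ --- since the thickened nerve $\Kbar$ arises as a thickening of the Segal nerve $\KSeg$ --- or by appealing to uniqueness of right adjoints in Quillen equivalences up to natural weak equivalence.
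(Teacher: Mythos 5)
Your proposal is correct and shares the essential skeleton of the paper's proof --- both reduce the question to a comparison of homotopy function complexes via the Quillen adjunction between $\PNat$ and $\Kbar$, and both close the argument in each direction by invoking Proposition \ref{CC-GCat-class-upto-eq}. Where you diverge is in how the distinction between $\KSeg$ and $\Kbar$ is handled, and in the level of abstraction. The paper's proof sets up an explicit three-row commutative diagram of function complexes, starting from the fact that the counit $\epsilon_C:\PNat\Kbar(C) \to C$ is a weak equivalence in the natural model category $\PCat$, uses the Quillen-adjunction isomorphism of derived mapping spaces, and concludes by two-out-of-three; notably the paper's proof is phrased for $\Kbar(F)$ throughout, so it really establishes the statement for the thickened nerve $\Kbar$ rather than for $\KSeg$ as literally stated in the lemma. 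Your argument targets $\KSeg$ directly, and what you flag as the ``main obstacle'' --- comparing $\KSeg$ with $\Kbar$ so that $\mathbf{L}\PNat \circ \KSeg \simeq \mathrm{id}$ --- is precisely the step needed to bring the lemma as stated into alignment with the proof as written; the paper does make this comparison available elsewhere, citing a natural weak equivalence $\KSeg \Rightarrow \Kbar$ from \cite[Cor.~6.19]{Sharma}, which is exactly your first suggested route. Your fallback route (uniqueness of right Quillen functors of Quillen equivalences up to natural weak equivalence) is shakier as phrased: derived functors are unique as functors of homotopy categories, but that alone does not furnish a natural zig-zag of weak equivalences between $\KSeg$ and $\Kbar$ at the level of $\PCat$; the explicit transformation from \cite[Cor.~6.19]{Sharma} is the correct tool. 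Once that is in hand, your argument is a cleaner and more abstract recasting of the same idea, trading the paper's hands-on diagram chase with the counit for a one-line identification $\mathbf{L}\PNat \circ \KSeg \simeq \mathrm{id}$ on homotopy categories.
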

\begin{proof}
	Let $F:C \to D$ be a compact closed equivalence of permutative categories. It follows from \cite[Thm. 6.18]{Sharma}, \cite[Thm. 6.17]{Sharma} and the observation that each object of the natural model category $\PCat$ is fibrant, that for each permutative category $C$, the counit map $\epsilon_C:\PNat\Kbar(C) \to C$ is a weak-equivalence in the natural model category $\PCat$ \emph{i.e.} the underlying functor of $\epsilon_C$ is an equivalence of categories. We consider the following diagram of function complexes for each compact closed permutative category $Z$:
	\begin{equation*}
	\xymatrix@C=28mm{
	\HMapC{D}{Z}{\PCatCC} \ar[r]^{\HMapC{F}{Z}{\PCatCC}} \ar[d]_{\HMapC{\epsilon_D}{Z}{\PCatCC}} & \HMapC{C}{Z}{\PCatCC} \ar[d]^{\HMapC{\epsilon_C}{Z}{\PCatCC}} \\
	\HMapC{\PNat\Kbar(D)}{Z}{\PCatCC} \ar[r] \ar[d]_\cong & \HMapC{\PNat\Kbar(C)}{Z}{\PCatCC} \ar[d]^\cong \\
	\HMapC{\Kbar(D)}{\Kbar(Z)}{\gCatCC} \ar[r]_{\HMapC{\Kbar(F)}{\Kbar(Z)}{\gCatCC}}  & \HMapC{\Kbar(C)}{\Kbar(Z)}{\gCatCC} 
   }
	\end{equation*}
	The two vertical isomorphisms follow from  \cite[Prop. 17.4.16]{Hirschhorn} applied to the Quillen pair from lemma \ref{Quil-Pair-KSeg-PNat}. It follows from \cite[Thm. 17.7.7]{Hirschhorn} that the top horizontal arrow and the upper two verical arrows are homotopy equivalences of simplicial sets. Now the two out of three property of weak equivalences in a model category implies that the lower horizontal map, namely $\HMapC{\Kbar(F)}{\Kbar(Z)}{\gCatCC}$ is a homotopy equivalence of simplicial sets. Now proposition \ref{CC-GCat-class-upto-eq} and \cite[Thm. 17.7.7]{Hirschhorn} together imply that $\Kbar(F)$ is a weak-equivalence in $\gCatCC$.
	
	Conversely, let us assume that $\Kbar(F)$ is a weak-equivalence in $\gCatCC$. Now the bottom horizontal arrow in the above diagram is a homotopy equivalence of simplicial sets and therefore the top horizontal arrow is one too. Now proposition \ref{CC-GCat-class-upto-eq} and \cite[Thm. 17.7.7]{Hirschhorn} together imply that $F$ is a weak-equivalence in $\PCatCC$. 
	\end{proof}
 The following corollary is an easy consequence of the above lemma:
 \begin{coro}
 	\label{PNat-pres-ref-WE}
 	The left Quillen functor $\PNat$ preserves and reflects compact closed equivalences of $\gCats$.
 	\end{coro}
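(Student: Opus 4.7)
The plan is to exploit the naturality square for the unit of the Quillen equivalence $(\PNat, \Kbar)$ together with the preceding lemma, reducing everything to a two-out-of-three argument.

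First, I would fix a morphism $F:X \to Y$ of $\gCats$ and form the commutative square
\begin{equation*}
\xymatrix{
X \ar[r]^F \ar[d]_{\eta_X} & Y \ar[d]^{\eta_Y} \\
\Kbar\PNat(X) \ar[r]_{\Kbar\PNat(F)} & \Kbar\PNat(Y)
}
\end{equation*}
where $\eta$ denotes the unit of the adjunction $(\PNat, \Kbar)$. By \cite[Lem. 6.15]{Sharma}, both $\eta_X$ and $\eta_Y$ are \emph{strict} equivalences of $\gCats$; in particular they are $\{j\}$-local equivalences, hence compact closed equivalences in the sense of the model category $\gCatCC$.

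Next, I would invoke the two-out-of-three property for weak equivalences in $\gCatCC$ applied to this square: $F$ is a compact closed equivalence of $\gCats$ if and only if $\Kbar\PNat(F)$ is. Finally, Lemma \ref{Seg-Ner-pres-ref-CC-Eq} asserts that $\Kbar$ both preserves and reflects compact closed equivalences, so $\Kbar\PNat(F)$ is a compact closed equivalence of $\gCats$ if and only if $\PNat(F)$ is a compact closed equivalence of permutative categories. Chaining the two equivalences yields the corollary.

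There is no real obstacle here, since the heavy lifting was done in the preceding lemma and in Sharma's construction of the Quillen equivalence between $\PCat$ and $\gCatSM$; the only point worth being careful about is that the unit $\eta_X$ is a strict (not merely compact closed) equivalence, which is exactly what \cite[Lem. 6.15]{Sharma} provides and what is needed to apply two-out-of-three inside the localized model structure $\gCatCC$.
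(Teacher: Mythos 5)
Your argument is correct and is the expected one: the paper states only that the corollary is "an easy consequence of the above lemma," and your proof supplies exactly the standard two-out-of-three argument against the unit naturality square, combined with Lemma \ref{Seg-Ner-pres-ref-CC-Eq}. The one hypothesis you implicitly lean on — that $\eta_X$ is a weak equivalence for \emph{all} $X$, not merely cofibrant ones — is indeed what the paper asserts (the Quillen equivalence of \cite[Thm. 6.18]{Sharma} is claimed to be a "strict equivalence of the underlying homotopy theories," with unit and counit natural weak equivalences), so the proof goes through as you wrote it.
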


Now we will state and prove the main result of this paper:
\begin{thm}
	\label{Quil-Eq}
	The adjoint pair $(\PNat, \Kbar)$ is a Quillen equivalence between the natural model category of compact closed permutative categories $\PCatCC$ and the model category of coherently commutative monoidal categories $\gCatCC$.
\end{thm}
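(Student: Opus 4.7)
The plan is to verify the defining condition of a Quillen equivalence using the Quillen pair already established in Lemma \ref{Quil-Pair-KSeg-PNat}, by reducing to the preservation/reflection properties of $\Kbar$ from Lemma \ref{Seg-Ner-pres-ref-CC-Eq}, together with the fact that the unit of the unlocalized adjunction is already a strict equivalence of $\gCats$. More precisely, for a cofibrant object $X$ in $\gCatCC$, a fibrant object $Y$ in $\PCatCC$, and a morphism $f:\PNat(X) \to Y$, the goal is to show that $f$ is a compact closed equivalence of permutative categories if and only if its adjunct $\tilde f: X \to \Kbar(Y)$ is a compact closed equivalence of $\gCats$.

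The first step is to note the factorization $\tilde f = \Kbar(f) \circ \eta_X$, where $\eta_X$ is the unit of the adjunction $(\PNat, \Kbar)$. By \cite[Lem. 6.15]{Sharma}, $\eta_X$ is a strict equivalence of $\gCats$, hence a weak equivalence in $\gCatSM$. Since $\gCatCC$ is a left Bousfield localization of $\gCatSM$ and localization only enlarges the class of weak equivalences, $\eta_X$ is also a compact closed equivalence of $\gCats$. By the two-out-of-three property in $\gCatCC$, the map $\tilde f$ is a weak equivalence in $\gCatCC$ if and only if $\Kbar(f)$ is.

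The second step is to invoke Lemma \ref{Seg-Ner-pres-ref-CC-Eq}: the functor $\Kbar$ both preserves and reflects compact closed equivalences. Consequently $\Kbar(f)$ is a weak equivalence in $\gCatCC$ if and only if $f$ is a weak equivalence in $\PCatCC$. Chaining the two equivalences yields the Quillen equivalence condition. I expect no serious obstacle here, since the argument is essentially formal once the preservation/reflection results and the strict equivalence of the unit are in hand; the only subtle point to check is that the strict equivalence $\eta_X$ remains a weak equivalence after passing from $\gCatSM$ to the localization $\gCatCC$, but this is immediate from the definition of left Bousfield localization.
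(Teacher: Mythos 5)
Your proposal is correct and follows essentially the same route as the paper's proof: both factor the adjunct as $\Kbar(F)\circ\eta_X$, observe that the unit $\eta_X$ is a weak equivalence for cofibrant $X$ (the paper appeals to \cite[Prop.~1.3.13]{Hovey} via the underlying Quillen equivalence $\PCat \rightleftarrows \gCatSM$ and the fact that all objects of $\PCat$ are fibrant, while you cite the strict-equivalence result of \cite[Lem.~6.15]{Sharma} directly — both yield the same conclusion after passing to the localization), and then conclude by two-out-of-three together with the preservation/reflection property of $\Kbar$ from Lemma~\ref{Seg-Ner-pres-ref-CC-Eq}.
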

\begin{proof}
	Let $X$ be a cofibrant object in $\gCatSM$ and $C$ be a fibrant object of $\PCatCC$. We will show that a map $F:\PNat(X) \to C$ in $\PCatCC$ is a weak equivalence if and only if it's adjunct map $\phi(F):X \to \Kbar(C)$ is a weak-equivalence in $\gCatCC$.
	
	We first recall that the Quillen pair $(\PNat,\Kbar)$ is a Quillen equivalence between the natural model category $\PCat$ and the model category of coherently commutative monoidal categories $\gCatSM$ \cite[Thm. 6.18]{Sharma}. We further recall that every object in the natural model category $\PCat$ is fibrant. Now it follows from \cite[Prop. 1.3.13.]{Hovey} that for each Q-cofibrant $\gCat$ $X$, the unit map of the adjunction $\eta_X:X \to \Kbar\PNat(X)$ is a weak-equivalence in $\gCatSM$ and therefore a weak-equivalence in $\gCatCC$.
	
	Now the result follows from proposition \ref{Seg-Ner-pres-ref-CC-Eq} and the following commutative diagram:
	\begin{equation*}
	\xymatrix{
	\Kbar\PNat(X) \ar[r]^{\Kbar(F)}  & \Kbar(C) \\
	X \ar[ru]_{\phi(F)} \ar[u]^{\eta(X)}
     }
	\end{equation*}
		\end{proof}
	 The following corollary follows from the above theorem, remark \ref{K-L-Str-QE} and the natural weak-equivalence $\KSeg \Rightarrow \Kbar$ constructed in \cite[Cor. 6.19]{Sharma}:
	 
	 \begin{coro}
	 	\label{Quil-Eq-PStr-KSeg}
	 	The adjoint pair $(\PStr, \KSeg)$ is a Quillen equivalence between the natural model category of compact closed permutative categories $\PCatCC$ and the model category of coherently commutative monoidal categories $\gCatCC$.
	 \end{coro}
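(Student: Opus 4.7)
The plan is to invoke the following standard principle in homotopical algebra: if two Quillen pairs $(F, G)$ and $(F', G')$ between the same pair of model categories are related by a natural weak-equivalence $\nu\colon G \Rightarrow G'$ between their right adjoints, then $(F, G)$ is a Quillen equivalence if and only if $(F', G')$ is. This holds because $\nu$ induces a natural isomorphism of total right derived functors $\mathbb{R}G \cong \mathbb{R}G'$, and a Quillen pair is a Quillen equivalence precisely when its right derived functor is an equivalence of homotopy categories.

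I would apply this principle here as follows. Theorem~\ref{Quil-Eq} supplies that $(\PNat, \Kbar)$ is a Quillen equivalence between $\PCatCC$ and $\gCatCC$; Remark~\ref{K-L-Str-QE} supplies that $(\PStr, \KSeg)$ is a Quillen pair between the same pair of model categories; and \cite[Cor.~6.19]{Sharma} supplies a natural weak-equivalence $\alpha\colon \KSeg \Rightarrow \Kbar$ in $\gCatSM$. Since $\gCatCC$ is a left Bousfield localization of $\gCatSM$, each component $\alpha_C\colon \KSeg(C)\to \Kbar(C)$ remains a weak-equivalence in $\gCatCC$, and the hypotheses of the principle are satisfied. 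The conclusion is that $(\PStr, \KSeg)$ is a Quillen equivalence between $\PCatCC$ and $\gCatCC$.

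Alternatively, one can give a direct proof by replicating the argument of Theorem~\ref{Quil-Eq} verbatim with $\KSeg$ in place of $\Kbar$, establishing the two ingredients it requires: (i) $\KSeg$ preserves and reflects compact closed equivalences of permutative categories, which follows from Lemma~\ref{Seg-Ner-pres-ref-CC-Eq} for $\Kbar$ combined with two-out-of-three along the naturality square of $\alpha$, using that the components of $\alpha$ are weak-equivalences in $\gCatCC$; and (ii) for a cofibrant $X$ in $\gCatCC$, the unit $\eta^{\KSeg}_X\colon X \to \KSeg\PStr(X)$ is a weak-equivalence in $\gCatSM$, hence in $\gCatCC$. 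Concretely, given cofibrant $X$ and fibrant $C$, the mate $\beta\colon \PNat \Rightarrow \PStr$ of $\alpha$ identifies $\alpha_C \circ \psi(F)$ with the $(\PNat,\Kbar)$-adjunct of $F \circ \beta_X$, and the result then follows by combining Theorem~\ref{Quil-Eq} with two successive applications of two-out-of-three against $\alpha_C$ and $\beta_X$.

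The principal obstacle is step (ii): verifying the unit condition for $(\PStr, \KSeg)$. The cleanest route is to cite Sharma's pre-localization Quillen equivalence for $(\PStr, \KSeg)$ between $\PCat$ and $\gCatSM$, whence the unit is a weak-equivalence before localization and remains one after. Attempting to derive (ii) purely from $\alpha$ and Theorem~\ref{Quil-Eq} becomes circular, because it leads back to the question of whether the mate $\beta_X$ is a weak-equivalence on cofibrant objects; this is why the abstract derived-functor formulation in the first two paragraphs is preferable, as it encapsulates exactly this bookkeeping.
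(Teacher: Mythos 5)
Your first two paragraphs are exactly the argument the paper has in mind: the corollary is stated to follow from Theorem~\ref{Quil-Eq}, Remark~\ref{K-L-Str-QE}, and the natural weak-equivalence $\KSeg \Rightarrow \Kbar$ of \cite[Cor.~6.19]{Sharma}, glued by the standard fact that a natural weak-equivalence between right Quillen functors identifies their total right derived functors and hence transfers the Quillen-equivalence property. Your later worries about circularity in the direct unit-condition route are well taken but moot, since the derived-functor formulation you lead with is the intended one and closes the argument without further work.
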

  The main result is stronger than what is stated in Theorem \ref{Quil-Eq}:
  \begin{rem}
  	The Quillen pair $(\PNat, \Kbar)$ induces a strict equivalence of the underlying homotopy theories on the two model categories in context. More precisely, both functors preserve weak-equivalences and the unit and counit maps are natural weak-equivalences.
  	\end{rem}

	\appendix

\makeatletter
\def\@tocline#1#2#3#4#5#6#7{\relax
\ifnum #1>\c@tocdepth 
  \else 
    \par \addpenalty\@secpenalty\addvspace{#2}%
\begingroup \hyphenpenalty\@M
    \@ifempty{#4}{%
      \@tempdima\csname r@tocindent\number#1\endcsname\relax
 }{%
   \@tempdima#4\relax
 }%
 \parindent\z@ \leftskip#3\relax \advance\leftskip\@tempdima\relax
 \rightskip\@pnumwidth plus4em \parfillskip-\@pnumwidth
 #5\leavevmode\hskip-\@tempdima #6\nobreak\relax
 \ifnum#1<0\hfill\else\dotfill\fi\hbox to\@pnumwidth{\@tocpagenum{#7}}\par
 \nobreak
 \endgroup
  \fi}
\makeatother

 \setcounter{tocdepth}{1}


\SelectTips{cm}{}
\newdir{ >}{{}*!/-7pt/@{>}}
\newcommand{\xycenter}[1]{\vcenter{\hbox{\xymatrix{#1}}}}
 \makeatletter

\renewcommand{\part}{\@startsection
  {part}
  {0}
  {0mm}
  {2\baselineskip}
  {1\baselineskip}
  {\centering \Large\sc}}

 \renewcommand{\thepart}{\Roman{part}} 

\makeatother


\newcommand{\ie}{\text{i.e.\ }}
\newcommand{\myemph}{\textit} 



\newtheoremstyle{mythm}%
{5pt}
{}
{\itshape}
{}
{\bfseries}
{.}
{.5em}
{}%

\newtheoremstyle{mydef}%
{5pt}
{3pt}
{}
{}
{\bfseries}
{.}
{.5em}
{}%

\theoremstyle{mythm}
\newtheorem{theorem}{Theorem}[section] 
\newtheorem{lemma}[theorem]{Lemma} 
\newtheorem{proposition}[theorem]{Proposition} 
\newtheorem{corollary}[theorem]{Corollary}  
\newtheorem{apptheorem}{Theorem}[section]
\newtheorem{exercise}[theorem]{Exercise}

\theoremstyle{mydef}
\newtheorem{definition}[theorem]{Definition}	
\newtheorem*{definition*}{Definition}	
\newtheorem{remark}[theorem]{Remark} 
\newtheorem*{remark*}{Remark} 
\newtheorem{example}[theorem]{Example}
\newtheorem{examples}[theorem]{Examples}
\newtheorem*{example*}{Example}
\newtheorem*{examples*}{Examples}

\newtheorem{para}[theorem]{}


\newcommand\pfun{\mathrel{\ooalign{\hfil$\mapstochar\mkern5mu$\hfil\cr$\to$\cr}}}

\newcommand{\defeq}{=_{\mathrm{def}}}
\newcommand{\co}{\colon}
\newcommand{\iso}{\cong} 
\newcommand{\rev}{\mathit{\vee}}
\newcommand{\op}{\mathrm{op}}
\newcommand{\catequiv}{\simeq} 
\newcommand{\cateq}{\simeq} 
\newcommand{\coend}{\int}  
\newcommand{\mat}{\multimap}
\newcommand{\asym}{\rightsquigarrow}
\newcommand{\HOM}{\mathrm{HOM}}
\newcommand{\HOMp}{\mathrm{HOM}^p}
\newcommand{\End}{\mathbf{End}}
\newcommand{\Idd}{\mathrm{Id}}
\newcommand{\yon}{\mathrm{y}}
\newcommand{\Psk}[1]{\mathrm{Pst}(#1)}
\newcommand{\PskB}{\Psk{\bcatE}}
\newcommand{\Mod}{\mathrm{Mod}}
\newcommand{\Mon}{\mathrm{Mon}}
\newcommand{\Bdj}{\mathrm{Bdj}}
\newcommand{\Palg}{P\text{-}\mathrm{Blg}}
\newcommand{\Qalg}{Q\text{-}\mathrm{Blg}}
\newcommand{\Left}{\mathit{Left}}
\newcommand{\RIGht}{\mathit{Right}}
\newcommand{\conv}{\mathbin{\widehat\oplus}}
\newcommand{\term}{T}
\newcommand{\colim}{\operatorname{colim}}

\newcommand{\overbar}[1]{\mkern 1.5mu\overline{\mkern-1.5mu#1\mkern-1.5mu}\mkern 1.5mu}

 \renewcommand{\vec}[1]{\overbar{#1}}
\newcommand{\seq}[1]{\boldsymbol{#1}}


\newcommand{\obj}[1]{\mathbb{#1}}
\newcommand{\objX}{\obj{X}}
\newcommand{\objXi}{\obj{X}_i}
\newcommand{\objXone}{\obj{X}_1}
\newcommand{\objXtwo}{\obj{X}_2}
\newcommand{\objY}{\obj{Y}}
\newcommand{\objYone}{{\objY}_1}
\newcommand{\objYtwo}{{\objY}_2}
\newcommand{\objU}{\obj{U}}
\newcommand{\objV}{\obj{V}}
\newcommand{\objZ}{\obj{Z}}
\newcommand{\objC}{\obj{C}}
\newcommand{\objK}{\obj{K}}
\newcommand{\objT}{\obj{T}}


\newcommand{\cat}[1]{\mathbb{#1}}
\newcommand{\catB}{\cat{B}}
\newcommand{\catC}{\cat{C}}
\newcommand{\catK}{\cat{K}}
\newcommand{\catM}{\cat{M}}
\newcommand{\catN}{\cat{N}}
\newcommand{\catP}{\cat{P}}
\newcommand{\catQ}{\cat{Q}}
\newcommand{\catR}{\cat{R}}
\newcommand{\catX}{\cat{X}}
\newcommand{\catY}{\cat{Y}}
\newcommand{\catZ}{\cat{Z}}
\newcommand{\catU}{\cat{U}}
\newcommand{\catW}{\cat{W}}


\newcommand{\bcat}[1]{\mathcal{#1}}
\newcommand{\bcatE}{\bcat{E}}
\newcommand{\bcatF}{\bcat{F}}
\newcommand{\bcatG}{\bcat{G}}


\newcommand{\Set}{\mathbf{Set}}
\newcommand{\sSet}{\mathbf{sSet}}

\newcommand{\Bb}{\mathbf{Bb}}
\newcommand{\Bicat}{\mathbf{Bicat}}
\newcommand{\Ext}{\mathrm{Ext}}

\newcommand{\Trib}{\mathbf{Trib}}
\newcommand{\Cart}{\mathbf{Cart}}
\newcommand{\CoCart}{\mathbf{CoCart}}
\newcommand{\flCat}{\mathbf{Cat}^{\mathrm{flim}}}
\newcommand{\Clan}{\mathbf{Clan}}
\newcommand{\CoClan}{\mathbf{CoClan}}
\newcommand{\coClan}{\mathbf{coClan}}
\newcommand{\coTrib}{\mathbf{coTrib}}
\newcommand{\FTribe}{\mathbf{FTribe}}
\newcommand{\HTribe}{\mathbf{HTribe}}
\newcommand{\CBT}{\mathbf{CBT}}
\newcommand{\CatV}{\Cat_{\catV}}
\newcommand{\CBTV}{\CBT_{\catV}}

\newcommand{\bfP}{\mathbf{P}}

\newcommand{\MCatV}{\mathbf{MCat}_\catV}
\newcommand{\MCBTV}{\mathbf{MCBT}_\catV}
\newcommand{\LMCatV}{{}^l\mathbf{MCat}_\catV}
\newcommand{\LMCBTV}{{}^l\mathbf{MCBT}_\catV}
\newcommand{\SMCatV}{\mathbf{SMCat}_\catV}
\newcommand{\SMCBTV}{\mathbf{SMCBT}_\catV}
\newcommand{\LSMCatV}{{}^l\mathbf{SMCat}_\catV}
\newcommand{\LSMCBTV}{{}^l\mathbf{SMCBT}_\catV}

\newcommand{\MatV}{\mathbf{Mat}_\catV}
\newcommand{\LocV}{\mathbf{Loc}_\catV}
\newcommand{\DistV}{\mathbf{Dist}_\catV}
\newcommand{\MDistV}{\mathbf{MDist}_\catV}
\newcommand{\TDistV}{T\mathbf{Dist}_\catV}
\newcommand{\LMDistV}{{}^l\mathbf{MDist}_\catV}
\newcommand{\LSMDistV}{{}^l\mathbf{SMDist}_\catV}
\newcommand{\SMDistV}{\mathbf{SMDist}_\catV}
\newcommand{\VSym}{\mathbf{Seq}_\catV}
\newcommand{\SDistV}{\Sigma_\star\mathbf{Dist}_\catV}
\newcommand{\BFunV}{\mathbf{BFun}_\catV}
\newcommand{\VOpd}{\catV\text{-}\mathbf{Opd}}

\newcommand{\RigV}{\mathbf{Rig}_\catV}
\newcommand{\SRigV}{\mathbf{SRig}_\catV}
\newcommand{\LRigV}{{}^l\mathbf{Rig}_\catV}
\newcommand{\LSRigV}{{}^l\mathbf{SRig}_\catV}
\newcommand{\FSRig}{\mathbf{FSRig}_\catV}
\newcommand{\rig}[1]{{#1}^*}
\newcommand{\rigX}{\rig{\catX}}
\newcommand{\rigY}{\rig{\catY}}
\newcommand{\rigZ}{\rig{\catZ}}


\newcommand{\moncatC}{\mathcal{C}}
\newcommand{\moncatV}{\mathcal{V}}
\newcommand{\catV}{\mathcal{V}}


\newcommand{\EM}{\mathrm{EM}}
\newcommand{\emB}{\EM(\bcatE)}
\newcommand{\Bim}{\mathrm{Bim}}
\newcommand{\bimE}{\Bim(\bcatE)}
\newcommand{\bimF}{\Bim(\bcatF)}
\newcommand{\Mnd}{\mathrm{Mnd}}
\newcommand{\MndB}{\Mnd(\bcatE)}
\newcommand{\MndC}{\Mnd(\bcatF)}


\newcommand{\Id}{\mathrm{Id}}
\newcommand{\IdB}{\Id_\bcatE}
\newcommand{\IdC}{\Id_\catZ}

\newcommand{\myJ}{\mathrm{J}}
\newcommand{\JB}{\myJ_\bcatE}
\newcommand{\JC}{\myJ_\bcatF}





\section[Aspects of Duality : \textit{by André Joyal}]{Aspects of Duality \\ \textit{by André Joyal}}

The results of this appendix are folklore and where possible, we will provide a reference.
\subsection{On certain monoidal transformations}

Let $\mathcal{C}$ be a symmetric monoidal category.
If $C$ is a dualisable object 
in $\mathcal{C}$, with dual object $C^\star $, let us denote 
by $\eta_C:I\to C\otimes C^\star $
and $\epsilon_C:C^\star \otimes C\to I$
the unit and counit of the duality.
If $D$ is another dualisable object,
then the  {\it dual} $f^\star:D^\star\to C^\star$
of a morphism $f:C\to D$ is defined to
be  the composite
$$
 \xymatrix{
D^\star \ar[rr]^(0.4){D^\star\otimes \eta_C} && 
D^\star \otimes C \otimes C^\star \ar[rr]^{D^\star \otimes f \otimes C^\star } && D^\star \otimes D \otimes C^\star  \ar[rr]^(0.6){\epsilon_D\otimes C^\star} && C^\star
}$$
Let us say that a morphism $g:C^\star \to D^\star$
 {\it respects} the morphism $f:C\to D$ if the following 
diagrams commutes

 $$
 \xymatrix{
 \ar[r]^(0.6){\epsilon_C} {C^\star}\otimes C   \ar[d]_{g\otimes f} & I \ar@{=}[d] \\
  \ar[r]^(0.6){\epsilon_D} {D^\star}\otimes D  & I
  } \quad \quad
     \xymatrix{
 I  \ar@{=}[d]  \ar[r]^(0.4){\eta_C}  & C\otimes {C^\star}   \ar[d]^{f\otimes g} \\
 I \ar[r]^(0.4){\eta_D}  &  D\otimes {D^\star}  
  }  $$

\begin{lemma} \label{respect}
If a morphism $g:C^\star \to D^\star$ respects a morphism
$f:C\to D$, then $g\circ f^\star =1_{D^\star}$ 
and $f^\star \circ g =1_{C^\star}$. 
Hence the morphisms $f$ and $g$ are invertible.
 \end{lemma}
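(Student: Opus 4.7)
The plan is to prove that $g$ is a two-sided inverse of $f^\star$ by expanding the definition of $f^\star$ and applying the two respect conditions together with the triangle (zig-zag) identities of the dualities $(C, C^\star, \eta_C, \epsilon_C)$ and $(D, D^\star, \eta_D, \epsilon_D)$. Once $f^\star$ has been exhibited as an isomorphism with inverse $g$, invertibility of $f$ itself follows from the standard fact that dualization is a contravariant equivalence on the full subcategory of dualisable objects of $\mathcal{C}$ and hence reflects isomorphisms.

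For the identity $g \circ f^\star = 1_{D^\star}$, I would begin from
\[
g \circ f^\star = g \circ (\epsilon_D \otimes C^\star) \circ (D^\star \otimes f \otimes C^\star) \circ (D^\star \otimes \eta_C),
\]
use naturality of $\otimes$ to rewrite the leftmost piece $g \circ (\epsilon_D \otimes C^\star)$ as $(\epsilon_D \otimes D^\star) \circ (D^\star \otimes D \otimes g)$, and apply the interchange law to combine the two middle factors into $D^\star \otimes (f \otimes g)$. The second respect diagram $(f \otimes g) \circ \eta_C = \eta_D$ then collapses the remainder to $(\epsilon_D \otimes D^\star) \circ (D^\star \otimes \eta_D)$, which is $1_{D^\star}$ by the first triangle identity for the duality of $D$.

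The companion identity $f^\star \circ g = 1_{C^\star}$ is obtained by a mirror-image manipulation: pushing $g$ past $D^\star \otimes \eta_C$ via naturality and regrouping by interchange, the composite rewrites as
\[
((\epsilon_D \circ (g \otimes f)) \otimes C^\star) \circ (C^\star \otimes \eta_C),
\]
and the first respect diagram $\epsilon_D \circ (g \otimes f) = \epsilon_C$ brings this to $(\epsilon_C \otimes C^\star) \circ (C^\star \otimes \eta_C) = 1_{C^\star}$ by the second triangle identity for the duality of $C$. With both composites equal to the identity, $f^\star$ and $g$ are mutually inverse, and applying $(-)^\star$ once more (or invoking its conservativity on dualisable objects together with the canonical isomorphism $C \cong C^{\star\star}$) yields invertibility of $f$ as well. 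The main obstacle is purely notational: carefully shuffling $f$, $g$, the $\eta$'s and the $\epsilon$'s past one another in the correct tensor positions; drawn as string diagrams both identities collapse immediately to the triangle identities once the respect hypotheses have been plugged in.
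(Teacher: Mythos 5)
Your proof is correct and follows essentially the same route as the paper: push $g$ through the composite defining $f^\star$ via naturality/interchange, apply the respect condition $(f\otimes g)\eta_C=\eta_D$, and finish with a triangle identity, with the companion identity done by the mirror argument. The only extra content you supply is the (valid) remark that invertibility of $f$ itself follows because dualization is conservative on dualisable objects via $C\cong C^{\star\star}$ — a step the paper states but does not spell out.
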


\begin{proof} Let us compute $g\circ f^\star$.
The following diagram commutes by naturality: 
$$
 \xymatrix{
D^\star \ar[rr]^(0.4){D^\star\otimes \eta_C} && 
D^\star \otimes C \otimes C^\star \ar[rr]^{D^\star \otimes f \otimes C^\star } 
\ar[d]^{D^\star \otimes C \otimes g} && D^\star \otimes D \otimes C^\star  \ar[rr]^(0.6){\epsilon_D\otimes C^\star}
\ar[d]^{D^\star \otimes D \otimes g} && C^\star \ar[d]^g \\
&&D^\star \otimes C \otimes D^\star \ar[rr]^{D^\star \otimes f \otimes D^\star } &&  D^\star \otimes D \otimes D^\star  
\ar[rr]^(0.6){\epsilon_D\otimes D^\star}  & & D^\star 
}$$ 
 It follows that the morphism $g\circ f^\star$ is the composite
 $$
 \xymatrix{
D^\star \ar[rr]^(0.4){D^\star\otimes \eta_C} && 
D^\star \otimes C \otimes C^\star \ar[rr]^{D^\star \otimes f \otimes g} && D^\star \otimes D \otimes D^\star  \ar[rr]^(0.6){\epsilon_D\otimes D^\star} && D^\star
}$$
But we have $(f\otimes g)\eta_C=\eta_D$, since $g$ respects $f$.
Hence the morphism $g\circ f^\star$ is the composite
$$
 \xymatrix{
D^\star \ar[rr]^(0.4){D^\star\otimes \eta_D} 
 && D^\star \otimes D \otimes D^\star  \ar[rr]^(0.6){\epsilon_D\otimes D^\star} && D^\star
}$$
But $(\epsilon_D\otimes D^\star) (D^\star\otimes \eta_D)=1_{D^\star}$
by the duality between $D$ and $D^\star$.
This shows that $g\circ f^\star=1_{D^\star}$.
The proof that $f^\star \circ g =1_{C^\star}$ is similar.
\end{proof}

\begin{lemma} \label{invertnat}
Let $\alpha:F\to G$ be a monoidal natural
transformation between symmetric monoidal functors
$F, G:\mathcal{C}\to \mathcal{D}$ between  symmetric
monoidal categories $\mathcal{C}=(\mathcal{C},\otimes, I)$
and $\mathcal{D}=(\mathcal{D},\otimes, J)$.
If the monoidal category $\mathcal{C}$
is compact closed, then $\alpha$ is invertible.
\end{lemma}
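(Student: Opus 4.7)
The plan is to apply Lemma \ref{respect} componentwise: for each object $C \in \mathcal{C}$, I will exhibit a morphism $g$ that respects $f := \alpha_C \co F(C) \to G(C)$, and then conclude from Lemma \ref{respect} that $\alpha_C$ is invertible. Since every object of $\mathcal{C}$ is dualisable, this gives invertibility of every component and hence of $\alpha$.

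The candidate for $g$ is $\alpha_{C^\star} \co F(C^\star) \to G(C^\star)$, transported along the canonical identifications $F(C^\star) \iso F(C)^\star$ and $G(C^\star) \iso G(C)^\star$. First I would recall the (folklore) fact that a symmetric monoidal functor preserves duals: since $F$ is symmetric monoidal, the composite of $F(\eta_C) \co F(I) \to F(C \otimes C^\star)$ with the structure isomorphisms of $F$ exhibits $F(C^\star)$ as a dual of $F(C)$ in $\mathcal{D}$, with counit built analogously from $F(\epsilon_C)$, and likewise for $G$.

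Next I would verify that $g = \alpha_{C^\star}$ respects $f = \alpha_C$, i.e.\ that the two squares
\[
\epsilon_{G(C)} \circ (\alpha_{C^\star} \otimes \alpha_C) = \epsilon_{F(C)}, \qquad (\alpha_C \otimes \alpha_{C^\star}) \circ \eta_{F(C)} = \eta_{G(C)}
\]
commute. Both identities reduce to a combination of three inputs: the naturality of $\alpha$ applied to the morphisms $\eta_C \co I \to C \otimes C^\star$ and $\epsilon_C \co C^\star \otimes C \to I$ in $\mathcal{C}$; the monoidality of $\alpha$, which identifies $\alpha_{C \otimes C^\star}$ with $\alpha_C \otimes \alpha_{C^\star}$ up to the monoidal coherence data of $F$ and $G$; and the unit axiom for $\alpha$, which guarantees compatibility with the units $F(I) \to J$ and $G(I) \to J$ used to transport $F(\eta_C)$ and $F(\epsilon_C)$ into the honest unit and counit of the duality $(F(C), F(C^\star))$ in $\mathcal{D}$.

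Granted these two equalities, Lemma \ref{respect} yields that $\alpha_C$ is invertible (with inverse equal to the dual of $\alpha_{C^\star}$, up to the canonical identifications). I expect the main obstacle to be purely bookkeeping: carefully tracking the monoidal structure isomorphisms of $F$ and $G$ so that $F(C^\star)$ and $G(C^\star)$ really play the roles of $F(C)^\star$ and $G(C)^\star$ in the statement of Lemma \ref{respect}. Once the identifications are fixed, the respect squares are essentially forced by naturality and monoidality, and no further hypothesis on $\mathcal{D}$ (such as compact closure) is needed.
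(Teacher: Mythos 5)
Your proposal is correct and follows essentially the same route as the paper: take $g = \alpha_{C^\star}$, observe that the monoidal structure of $F$ and $G$ makes $F(C^\star)$ and $G(C^\star)$ into duals of $F(C)$ and $G(C)$, verify that $g$ respects $\alpha_C$ by combining the unit axiom, naturality, and monoidality of $\alpha$, and then invoke Lemma \ref{respect}. The paper's proof records precisely this argument, writing out the three-square commutative diagram whose commutativity you describe in prose.
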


\begin{proof}
Let us show that the map $\alpha_C:F(C)\to G(C)$
is invertible for every object $C\in \mathcal{C}$.
The object $C$
has a dual $C^\star$, since the category $\mathcal{C}$
is compact closed.
 Let $\eta_C:I\to C\otimes C^\star $
and $\epsilon_C:C^\star \otimes C\to I$
be the unit and counit of the duality.
The object $F(C)$ has then a dual $F(C)^\star :=F(C^\star)$.
The unit $\eta_{F(C)}: J \to F(C)\otimes F(C^\star )$
is defined to be the composite
$$
 \xymatrix{
J \ar[r]^{\simeq} & 
F(I) \ar[rr]^{F(\eta_C)} && F(C \otimes C^\star)   \ar[r]^(0.45){\simeq} & F(C)\otimes F(C^\star )
}$$
and the counit $\epsilon_{F(C)}: F(C^\star ) \otimes F(C)\to J $
is defined to be the composite
$$
 \xymatrix{
F(C^\star )\otimes F(C)   \ar[r]^{\simeq} & 
F(C^\star \otimes C) \ar[rr]^{F(\epsilon_C)} &&    F(I) \ar[r]^(0.5){\simeq} & J
}$$
Similarly,
the object $G(C)$ has a dual $G(C)^\star :=G(C^\star)$.
The unit $\eta_{G(C)}: J \to G(C)\otimes G(C^\star) $
is defined to be the composite
$$
 \xymatrix{
J \ar[r]^{\simeq} & 
G(I) \ar[rr]^{G(\eta_C)} && G(C \otimes C^\star)   \ar[r]^(0.45){\simeq} & G(C)\otimes G(C^\star)
}$$
and the counit $\epsilon_{G(C)}: G(C^\star) \otimes G(C)\to J $
is defined to be the composite
$$
 \xymatrix{
G(C^\star )\otimes G(C)   \ar[r]^{\simeq} & 
G(C^\star \otimes C) \ar[rr]^{G(\epsilon_C)} &&    G(I) \ar[r]^(0.5){\simeq} & J
}$$
Let us show that the morphism $\alpha_{C^\star}:F(C^\star )\to G(C^\star)$
respects the morphism $\alpha_{C}:F(C)\to G(C)$.
The following diagram commutes, since the natural
transformation $\alpha:F\to G$ is monoidal

 $$
  \xymatrix{
J \ar@{=}[d]  \ar[r]^{\simeq} & 
F(I)\ar[d]^{\alpha_I}  \ar[rr]^{F(\eta_C)} && F(C \otimes C^\star) \ar[d]^{\alpha_{C\otimes C^\star}}   \ar[rr]^(0.45){\simeq} && F(C)\otimes F(C^\star ) \ar[d]^{\alpha_{C}\otimes \alpha_{C^\star}}\\
J \ar[r]^{\simeq} & 
G(I) \ar[rr]^{G(\eta_C)} && G(C \otimes C^\star)   \ar[rr]^(0.45){\simeq} && G(C)\otimes G(C^\star )
}$$
 
Hence the following square commutes

 $$ \xymatrix{
 J \ar@{=}[d]  \ar[rr]^(0.4){\eta_{F(C)}}  && F(C)\otimes {F(C^\star)}   \ar[d]^{\alpha_C\otimes \alpha_{C^\star}} \\
 J \ar[rr]^(0.4){\eta_{G(D)}}  &&  G(C)\otimes {G(D^\star)}  
  }  $$

Similarly, the following square commutes
$$\xymatrix{
 \ar[rr]^(0.6){\epsilon_{F(C)}} {F(C^\star)}\otimes F(C)   \ar[d]_{\alpha_{C^\star} \otimes \alpha_C} && J \ar@{=}[d] \\
  \ar[rr]^(0.6){\epsilon_{G(D)}} G({D^\star})\otimes G(D)  && J
  }$$
This shows that the morphism $\alpha_{C^\star}:F(C^\star )\to G(C^\star)$
respects the morphism $\alpha_{C}:F(C)\to G(C)$.
It then follows by Lemma \ref{respect} that $\alpha_C$
is invertible.
\end{proof}

If $\mathcal{C}$ and $\mathcal{D}$
are symmetric monoidal categories,
let us denote by $Hom_{SM}(\mathcal{C},\mathcal{D})$
the category of symmetric monoidal functors
$\mathcal{C}\to \mathcal{D}$.
The category $Hom_{SM}(\mathcal{C},\mathcal{D})$
is symmetric monoidal.

A different proof of the following proposition appears in \cite[Prop. 1.13]{DM}

\begin{proposition} \label{homgrp}
If $\mathcal{C}$ is a compact closed
symmetric monoidal category, then 
the symmetric monoidal category $Hom_{SM}(\mathcal{C},\mathcal{D})$
is a groupoid for every symmetric monoidal category $\mathcal{D}$. 
\end{proposition}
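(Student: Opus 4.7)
The plan is to deduce the proposition as an immediate corollary of Lemma \ref{invertnat}. Recall that by definition, a morphism in the category $Hom_{SM}(\mathcal{C},\mathcal{D})$ is precisely a monoidal natural transformation $\alpha:F\to G$ between symmetric monoidal functors $F,G:\mathcal{C}\to\mathcal{D}$. To show that $Hom_{SM}(\mathcal{C},\mathcal{D})$ is a groupoid, it therefore suffices to exhibit, for each such $\alpha$, a two-sided inverse in the same category.

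First I would invoke Lemma \ref{invertnat}: since $\mathcal{C}$ is compact closed by hypothesis, every component $\alpha_C:F(C)\to G(C)$ is invertible in $\mathcal{D}$. Hence $\alpha$ admits a pointwise inverse $\alpha^{-1}$ with components $(\alpha^{-1})_C = (\alpha_C)^{-1}$, and this $\alpha^{-1}$ is automatically a natural transformation $G\to F$ by the standard fact that pointwise inverses of natural isomorphisms are natural.

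The only remaining point to verify is that $\alpha^{-1}$ is itself a \emph{monoidal} natural transformation, so that it genuinely lives in $Hom_{SM}(\mathcal{C},\mathcal{D})$ and witnesses $\alpha$ as an isomorphism there. This is a routine diagram chase: the monoidality squares for $\alpha^{-1}$ (relating the structure maps $\phi_{C,C'}^G:G(C)\otimes G(C')\to G(C\otimes C')$ and $\phi_{C,C'}^F$, and the unit comparisons) are obtained by inverting the corresponding monoidality squares for $\alpha$, using that all components $\alpha_C,\alpha_{C'},\alpha_{C\otimes C'}$ and $\alpha_I$ are now known to be invertible. The unit-preservation square for $\alpha^{-1}$ follows similarly from that for $\alpha$.

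I expect no serious obstacle: the one conceptual step (invertibility of components) is supplied by Lemma \ref{invertnat}, and the remaining work is the purely formal verification that inverting the components of a monoidal natural isomorphism yields another monoidal natural transformation, which uses only that the monoidality axioms are expressed by commuting squares in which the relevant vertical maps are isomorphisms.
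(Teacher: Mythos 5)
Your proof is correct and follows essentially the same route as the paper: invoke Lemma \ref{invertnat} to deduce that each morphism of $Hom_{SM}(\mathcal{C},\mathcal{D})$ is (pointwise, hence) invertible, then observe that the inverse of a monoidal natural isomorphism is again monoidal. The paper cites this last point as "a general result" without elaboration, whereas you sketch the diagram chase; the substance is identical.
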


\begin{proof} Let $\alpha:F\to G$
be a morphism in the category $Hom_{SM}(\mathcal{C},\mathcal{D})$.
The natural transformation $\alpha$ is invertible by Lemma
\ref{invertnat}. Its inverse $\alpha^{-1}:G\to F$
is monoidal (by a general result).
Thus, $Hom_{SM}(\mathcal{C},\mathcal{D})$
is a groupoid.
 \end{proof}

 \subsection{On the compact closed
 symmetric monoidal category free on one generator } 
 Let me denote by $\mathcal{B}$ the compact closed symmetric
 monoidal category freely generated by one object $U\in 
 \mathcal{B}$. By definition, for every 
 compact closed symmetric monoidal category 
 $\mathcal{C}$ and every object $C\in  \mathcal{C}$
 there exists a symmetric monoidal functor
 $F:\mathcal{B}\to \mathcal{C}$ such that $F(U)=C$,
and the functor $F$ is unique
 up to unique isomorphism: if $G:B\to \mathcal{C}$
 is another functor such that $G(U)=C$,
 then there exists a unique monoidal natural isomorphism
 $\alpha: F\to G$ such that $\alpha_U=1_C$.

\medskip

If $\mathcal{C}$ is a category, 
then the subcategory of invertible morphisms
of $\mathcal{C}$ is a groupoid
called the {\it core} of $\mathcal{C}$.
I will denote the core of $\mathcal{C}$
by $\mathcal{C}^{cor}$.
The core of a symmetric monoidal
category $\mathcal{C}$ is a symmetric monoidal
subcategory of $\mathcal{C}$.

\medskip

I will use the following construction
in the proof of the next propsition.
 Let me denote by $J$ the groupoid 
freely generated by one isomorphism $i:0\to 1$.
If $\mathcal{C}$ is a category
then an object of the category $\mathcal{C}^J$
is an isomorphism $f$ in $\mathcal{C}$.
The source and target
functors  $s,t:\mathcal{C}^J\to \mathcal{C}$
are connected by a natural isomorphism
$h:s\to t$ defined by putting $h(f)=f:s(f)\to t(f)$.
The category $\mathcal{C}^J$ is symmetric
monoidal if $\mathcal{C}$ is symmetric
monoidal. Moreover, the source and target
functors  $s,t:\mathcal{C}^J\to \mathcal{C}$
and the
natural transformation $h:s\to t$
are symmetric monoidal.
The category $\mathcal{C}^J$
is compact closed if $\mathcal{C}$
is compact closed, since the
functor $s:\mathcal{C}^J\to \mathcal{C}$
is an equivalence of symmetric monoidal
categories.

\begin{proposition}
	\label{1-D-CH}
 Let $\mathcal{B}$ the compact closed symmetric
 monoidal category freely generated by one object $U\in 
 \mathcal{B}$.
If $\mathcal{C}$ is a compact closed
symmetric monoidal category, 
then the evaluation functor
$$e_U: Hom_{SM}(\mathcal{B},\mathcal{C})\to 
\mathcal{C}$$
defined by putting $ev_U(F):=F(U)$
takes its values in the core of  $\mathcal{C}$.
Moreover, the induced functor 
$$e'_U: Hom_{SM}(\mathcal{B},\mathcal{C})\to 
\mathcal{C}^{cor}$$
is an equivalence of symmetric monoidal categories.
\end{proposition}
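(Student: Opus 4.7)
The plan is to establish three things: that $e_U$ factors through the core, that the induced functor $e'_U$ is essentially surjective, and that it is fully faithful. The symmetric monoidal structure on $Hom_{SM}(\mathcal{B},\mathcal{C})$ is computed pointwise, and the core of any symmetric monoidal category is a symmetric monoidal subcategory, so once the ordinary equivalence is in hand the functor $e'_U$ will be monoidal and hence a symmetric monoidal equivalence automatically.

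For the first two points I would invoke the universal property of $\mathcal{B}$ together with Proposition~\ref{homgrp}. Since $\mathcal{B}$ is compact closed, Proposition~\ref{homgrp} tells us that $Hom_{SM}(\mathcal{B},\mathcal{C})$ is a groupoid, so every morphism $\alpha\colon F\to G$ is an invertible monoidal natural transformation. Evaluating at $U$ produces an isomorphism $\alpha_U\colon F(U)\to G(U)$ in $\mathcal{C}$, so $e_U$ factors through $\mathcal{C}^{cor}$. Essential surjectivity is immediate: given $C\in\mathcal{C}^{cor}$, the universal property of $\mathcal{B}$ supplies an $F\colon \mathcal{B}\to\mathcal{C}$ with $F(U)=C$, so $e'_U(F)=C$ on the nose.

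The heart of the argument is fullness, and I would use the construction $\mathcal{C}^J$ recalled just before the proposition. Given $F,G\colon \mathcal{B}\to \mathcal{C}$ and an isomorphism $\phi\colon F(U)\to G(U)$, view $\phi$ as an object of $\mathcal{C}^J$. Because $\mathcal{C}$ is compact closed, so is $\mathcal{C}^J$, hence the universal property of $\mathcal{B}$ yields a symmetric monoidal functor $H\colon \mathcal{B}\to \mathcal{C}^J$ with $H(U)=\phi$. Composing with the symmetric monoidal source and target functors $s,t\colon \mathcal{C}^J\to \mathcal{C}$ produces $sH,tH\colon \mathcal{B}\to \mathcal{C}$ sending $U$ to $F(U)$ and $G(U)$ respectively, and the uniqueness-up-to-unique-iso clause of the universal property gives monoidal natural isomorphisms $\beta\colon sH\to F$ and $\gamma\colon tH\to G$ with $\beta_U=1_{F(U)}$ and $\gamma_U=1_{G(U)}$. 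Define $\alpha=\gamma\circ(hH)\circ\beta^{-1}\colon F\to G$, where $h\colon s\to t$ is the monoidal natural transformation associated to $\mathcal{C}^J$. Then $\alpha$ is monoidal, and at $U$ it computes to $\gamma_U\circ\phi\circ\beta_U^{-1}=\phi$.

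Faithfulness is then a short universal-property argument: if $\alpha,\alpha'\colon F\to G$ agree at $U$, then $\alpha^{-1}\circ\alpha'\colon F\to F$ is a monoidal natural automorphism of $F$ with component $1_{F(U)}$ at $U$, so by the uniqueness clause of the universal property applied to the pair $(F,F)$ it equals $1_F$, giving $\alpha=\alpha'$. The step I expect to require the most care is the one above that invokes compact closedness of $\mathcal{C}^J$ to feed $\phi$ into the universal property of $\mathcal{B}$ — without the preparatory remark that $s\colon \mathcal{C}^J\to \mathcal{C}$ is a symmetric monoidal equivalence (so that $\mathcal{C}^J$ inherits duals), this trick would not be available, and the construction of $\alpha$ from $\phi$ would be the main obstacle.
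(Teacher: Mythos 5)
Your proof is correct and follows essentially the same route as the paper's: both establish the groupoid property via Proposition~\ref{homgrp}, get essential surjectivity from freeness, and prove full faithfulness by feeding an isomorphism $\phi$ into $\mathcal{C}^J$ to get $H:\mathcal{B}\to\mathcal{C}^J$ with $H(U)=\phi$, then splicing $\phi$ between $F$ and $G$ using the source/target functors, the canonical $h:s\to t$, and the uniqueness clause of the universal property. The only cosmetic difference is the orientation of your comparison isomorphisms ($\beta:sH\to F$ rather than $\rho:F\to sH$), which is immaterial since everything in sight is invertible.
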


\begin{proof} The category $Hom_{SM}(\mathcal{B},\mathcal{C})$ is a groupoid by Proposition
 \ref{homgrp}. Hence the functor $e_U$
 takes its values in the core of $\mathcal{C}$.
 Let us show that the induced functor $e'_U$
 is an equivalence of categories.
 For every object $C\in \mathcal{C}$
 there exists a symmetric monoidal
 functor $F:\mathcal{B}\to \mathcal{C}$ such that $F(U)=C$,
 since $\mathcal{C}$ is compact closed
 and $\mathcal{B}$ is compact closed and freely
 generated by the object $U\in \mathcal{B}$.
 We then have $e'_U(F):=e_U(F):=F(U)=C$.
 We have proved that the functor $e'_U$ is surjective
 on objects. Let us show that the functor
 $e'_U$ is fully faithful.  If $F,G:\mathcal{B}\to \mathcal{C}$
 are symmetric monoidal functors, let us show
 that for every isomorphism 
 $f:F(U)\to G(U)$ there exists a unique monoidal natural
isomorphism $\alpha:F\to G$ such that $\alpha_U=f$.
We shall first prove the existence of $\alpha$.
The symmetric
monoidal category $\mathcal{C}^J$
is compact closed, since the 
symmetric monoidal category
$\mathcal{C}$ is compact closed by hypothesis.
The isomorphism $f$ is an object in $\mathcal{C}^J$.
By the freeness of $\mathcal{B}$,
 there exists a symmetric monoidal
functor
$$H:\mathcal{B}\to \mathcal{C}^J$$
such that $H(U)=f$.
We have $s H(U)=s(f)=F(U)$, since $f:F(U)\to G(U)$.
The functor $sH:\mathcal{B}\to \mathcal{C}$
is symmetric monoidal, since the
functors $H$ and $s$ are.
Thus, there exists a unique monoidal natural isomorphism 
$\rho:F\to sH$ such that $\rho_U=1_{F(U)}$.
Similarly, if $t:\mathcal{C}^J\to \mathcal{C}$ is 
the target functor, then $t H(U)=t(f)=G(U)$.
Thus, there exists a unique monoidal natural isomorphism 
$\lambda:t H\to G$ such that $\lambda_U=1_{G(U)}$.
If $h:s\to t$ is the canonical isomorphism, then
the composite $\alpha:=\lambda h \rho$
is a monoidal natural isomorphism $\alpha:F\to G$
$$
\xymatrix{
F\ar[r]^\rho & sH \ar[r]^{h\circ H}  & t H \ar[r]^{\lambda} & G
}$$
We have $\alpha_U=f$, since $\rho_U=1_{F(U)}$,
$(h\circ H)_U=h({H(U)})=h(f)=f$ and $\lambda_U=1_{G(U)}$.
The existence of $\alpha:F\to G$ is proved.
Let us show that $\alpha$ is unique.
Let $\beta:F\to G$ a monoidal
natural isomorphism such that $\beta_U=f$.
Then $\gamma:=\beta^{-1}\alpha:F\to F$
is a monoidal natural isomorphism such that $\gamma_U=1_U$. It follows that $\gamma =1_F$,
since $\mathcal{B}$ is freely generated by the object $U\in 
 \mathcal{B}$.
 We have proved that the functor 
$e'_U: Hom_{SM}(\mathcal{B},\mathcal{C})\to 
\mathcal{C}^{cor}$
 is fully faithful. It is thus an equivalence
 of categories, since it is surjective on objects.
 It is also an equivalence of symmetric
 monoidal categories, since it is a 
 symmetric monoidal functor.
 \end{proof}


	\section[Localization in model categories]{Localization in model categories}
\label{loc-Mdl-Cats}
In this appendix we recall the notion of a \emph{left Bousfield localization} of a model category and also recall an existence result of the same.

\begin{df}
	Let $\M$ be a model category and let $\S$ be a class of maps in $\M$.
	The left Bousfield localization of $\M$ with respect to $\S$
	is a model category structure $L_\S\M$ on the underlying category of $\M$
	such that
	\begin{enumerate}
		\item The class of cofibrations of $L_\S\M$ is the same as the
		class of cofibrations of $\M$.
		
		\item A map $f:A \to B$ is a weak equivalence in $L_\S\M$ if it is an $\S$-local equivalence,
		namely, for every fibrant $\S$-local object $X$, the induced map on homotopy
		function complexes
		\[
		f^\ast:Map_{\M}^h(B, X) \to Map_{\M}^h(A, X)
		\]
		is a weak homotopy equivalence of simplicial sets. Recall
		that an object $X$ is called fibrant $\S$-local if $X$ is fibrant
		in $\M$ and for every element
		$g:K \to L$ of the set $\S$, the induced map on
		homotopy function complexes
		\[
		g^\ast:Map_{\M}^h(L, X) \to Map_{\M}^h(K, X)
		\]
		is a weak homotopy equivalence of simplicial sets.
		
	\end{enumerate}
	
\end{df}

We recall the following theorem
which will be the main tool in the construction of the
desired model category. This theorem first appeared in an unpublished work \cite{smith}
but a proof was later provided by Barwick in \cite{CB1}.
\begin{thm} \cite[Theorem 2.11]{CB1}
	\label{local-tool}
	If $\M$ is a left-proper, combinatorial model category and $\S$ is a small
	set of homotopy classes of morphisms of $\M$, the left Bousfield localization $L_\S\M$ of
	$\M$ along any set representing $\S$ exists and satisfies the following conditions.
	\begin{enumerate}
		\item The model category $L_\S\M$ is left proper and combinatorial.
		\item As a category, $L_\S\M$ is simply $\M$.
		\item The cofibrations of $L_\S\M$ are exactly those of $\M$.
		\item The fibrant objects of $L_\S\M$ are the fibrant $\S$-local objects $Z$ of $\M$.
		\item The weak equivalences of $L_\S\M$ are the $\S$-local equivalences.
	\end{enumerate}
\end{thm}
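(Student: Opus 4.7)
My plan is to follow the strategy of Smith's recognition principle for combinatorial model categories, as carried out by Barwick. The starting point is to single out the candidate classes: take the cofibrations of $L_\S\M$ to be exactly the cofibrations of $\M$, and define the weak equivalences $W_\S$ to be the $\S$-local equivalences (maps $f$ inducing weak homotopy equivalences on $\Map^h_\M(-,X)$ for every fibrant $\S$-local $X$). The fibrations are then forced by the lifting property against trivial cofibrations. The theorem will then be assembled in four stages: accessibility of $W_\S$, verification of the model category axioms via Smith's theorem, identification of fibrant objects, and preservation of left properness.

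First I would establish that $W_\S$ contains $W$ (by the definition of $\S$-local via homotopy function complexes, original weak equivalences induce isomorphisms on the relevant $\Map^h$), and that $W_\S$ satisfies the two-out-of-three property and closure under retracts (both are immediate from the definition, since a weak homotopy equivalence of simplicial sets is closed under these operations). The heart of the argument is to show that $W_\S$ is an \emph{accessible} subcategory of the arrow category $\M^{[1]}$. This uses: (i) combinatoriality of $\M$, which guarantees that a functorial cosimplicial framing and hence $\Map^h_\M$ can be made to preserve sufficiently filtered colimits; (ii) the smallness of $\S$, which bounds the collection of fibrant $\S$-local objects one must test against by means of a cardinal argument (any fibrant $\S$-local object can be written as a sufficiently filtered colimit of $\kappa$-presentable fibrant $\S$-local objects for a suitable regular cardinal $\kappa$ depending on $\S$). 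This is the main obstacle: the accessibility step requires a careful cardinal-counting argument producing a small \emph{detecting} set of fibrant $\S$-local objects, and it is the only place where the hypothesis ``combinatorial'' is essential.

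Once accessibility of $W_\S$ is in hand, I would apply Smith's theorem on the existence of combinatorial model structures: given a locally presentable category, a set $I$ of generating cofibrations, and an accessible class $W$ of weak equivalences containing the $I$-cell equivalences and satisfying two-out-of-three and retract closure, together with the condition that $I$-injective maps lie in $W$, there is a combinatorial model structure with these data. The nontrivial condition to verify is that any map with the right lifting property against all cofibrations (equivalently, a trivial fibration in $\M$) is in $W_\S$; this follows because such maps are already in $W \subset W_\S$. The generating trivial cofibrations are then produced by Smith's theorem as a small set, and the model structure is automatically cofibrantly generated.

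With the model structure in existence, the remaining characterizations are formal. Fibrant objects are those with the right lifting property against all trivial cofibrations of $L_\S\M$; unpacking and using a standard argument with horn-extensions one identifies them precisely with fibrant objects of $\M$ that are $\S$-local. Left properness of $L_\S\M$ follows from left properness of $\M$: the cofibrations and the cofibrant objects are unchanged, and a pushout of an $\S$-local equivalence along a cofibration remains an $\S$-local equivalence because, after taking $\Map^h_\M(-,X)$ for a fibrant $\S$-local $X$, the cofibration becomes a fibration of simplicial sets and pushouts of $\M$ turn into homotopy pullbacks of mapping spaces, so a weak equivalence is preserved. Combinatoriality is transferred because the localization is cofibrantly generated with small $I$ and $J$ over a locally presentable underlying category.
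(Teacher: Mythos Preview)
The paper does not give its own proof of this theorem: it is stated in the appendix as a recalled result, attributed to Smith (unpublished) with a published proof by Barwick \cite{CB1}, and no argument is supplied beyond the citation. Your sketch follows precisely the Smith--Barwick strategy that the paper is citing, so there is no divergence in approach to speak of.

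One small remark on your outline: in applying Smith's recognition theorem you verify accessibility of $W_\S$, two-out-of-three, and that $I$-injectives lie in $W_\S$, but you do not explicitly check that $\mathrm{cof}(I)\cap W_\S$ is closed under pushout and transfinite composition, which is one of the hypotheses. This is exactly where left properness of $\M$ enters \emph{before} the model structure exists (not only afterward, as in your final paragraph): a pushout of an $\S$-local trivial cofibration along a cofibration is again an $\S$-local equivalence because, applying $\Map^h_\M(-,X)$ for $X$ fibrant $\S$-local, the square becomes a homotopy pullback of simplicial sets, and this uses that the pushout in $\M$ is a homotopy pushout, i.e.\ left properness of $\M$. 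With that adjustment your sketch is a faithful summary of the cited proof.
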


	\bibliographystyle{amsalpha}
	\bibliography{HomCLQCat}
	
\end{document}